\numberwithin{equation}{section}
\newtheorem{theorem}{Theorem}[section]
\newtheorem{lemma}[theorem]{Lemma}
\newtheorem{proposition}[theorem]{Proposition}
\newcommand{\R}{{\mathbb R}}
\renewcommand{\S}{{\mathbb S}}
\newcommand{\eps}{\varepsilon}
\renewcommand{\epsilon}{\varepsilon}
\renewcommand{\theta}{{\vartheta}}
\renewcommand{\rightarrow}{\to}
\newcommand{\ud}{\mathrm{d}}
\newcommand{\N}{\mathbb{N}}
\newcommand{\dive}{\mathrm{div}}
\title[Fractional problems with exponential growth on $\R$]{Nonautonomous fractional problems \\ with exponential growth}
\author[J.M.\ do \'O]{Jo\~ao Marcos do \'O}
\author[O.H.\ Miyagaki]{Ol\'{i}mpio H.\ Miyagaki}
\author[M. Squassina]{Marco Squassina}
\address[J.M. do \'O]{Department of Mathematics,
Federal University of Para\'{\i}ba
\newline\indent
58051-900, Jo\~ao Pessoa-PB, Brazil}
\email{\href{mailto:jmbo@pq.cnpq.br}{jmbo@pq.cnpq.br}}
\address[O.\ Miyagaki]{Department of Mathematics, 
 Federal University of Juiz de Fora
\newline\indent 
36036-330  Juiz de Fora, Minas Gerais, Brazil}
\email{\href{mailto:olimpio@ufv.br}{olimpio@ufv.br}}
\address[M.\ Squassina]{Dipartimento di Informatica 
Universit\`a degli Studi di Verona,
\newline\indent
C\'a Vignal 2, Strada Le Grazie 15, I-37134 Verona, Italy}
\email{\href{mailto:marco.squassina@univr.it}{marco.squassina@univr.it}}
\thanks{Research supported in part by INCTmat/MCT/Brazil.\ J.M.\ do \'O was supported by CNPq, CAPES/Brazil, 
M.\ Squassina was supported by MIUR project Variational and Topological Methods in the Study of Nonlinear Phenomena, O.H.\ Miyagaki
 was partially supported by CNPq/Brazil and  CAPES/Brazil (Proc 2531/14-3). The paper was completed while the second author was visiting the
Department of Mathematics of Rutgers University, whose hospitality he gratefully
 acknowledges.\ He would also like to express his gratitude to Prof.\ H.\ Brezis.}
\subjclass[2000]{35P15, 35P30, 35R11}
\keywords{Trudinger-Moser inequality, Schr\"odinger equations, vanishing potentials}
\begin{document}

\begin{abstract}
We study a class of nonlinear nonautonomous nonlocal equations with 
subcritical and critical exponential nonlinearity. The involved potential can vanish
at infinity.
\end{abstract}
\maketitle




\section{Introduction and main results}
\noindent
We consider existence of positive solutions for the following class of equations 
\begin{equation}
\label{PS}
(-\Delta)^{1/2} u + u = K(x) g(u) \quad\,\, \text{in $\R$}.
\end{equation}
Here   $(-\Delta)^{1/2} $ stands for the $1/2$-Laplacian, 
$K:\R\to\R$ is a positive function and $g$ is a continuous function with exponential subcritical or  critical growth in the sense of 
the Trudinger-Moser embedding due to Ozawa \cite{Ozawa}.\
Recently, a great attention has been focused on the study of nonlocal operators.
These arise in thin obstacle problems, optimization, finance, phase
transitions, stratified materials, anomalous diffusion, crystal dislocation, soft thin films, semipermeable membranes, flame
propagation, conservation laws, water waves, etc. \cite{nezza}. 
The fractional  laplacian $(-\Delta)^s$ for  $s\in(0,1)$ of a function $u:\R \to \R$ is  defined  by  
${\mathcal F}((-\Delta)^{s}u)(\xi)=|\xi|^{2s}{\mathcal F}(u)(\xi),$ 
 where ${\mathcal F}$ is the Fourier transform. Since the problem is set on the whole space
 one has to tackle compactness issues, which can be overcome by considering suitable
 assumptions of the vanishing behaviour of $K$ at infinity.
Recently the problem in $\R^N$ with $N>2s$, $s\in (0,1)$, $2^*_s=2N/(N-2s)$,
\begin{equation*}
(-\Delta)^s u + V(x)u = K(x)g(u) + \lambda |u|^{2^{*}_{s}-2}u \quad\,\, \text{in $\R^N$}.
\end{equation*}
where $g$ has subcritical growth has been investigated in \cite{JMO} inspired by some arguments of \cite{AS2012}. 
The aim of this paper is to extend the achievements of \cite{JMO} to cover the case where the nonlinearity is allowed to grow at an
exponential rate. As is pointed out in \cite{Antonio}, nonlocal problems with linear fractional diffusion involving exponential 
growth should be set in $\R$. In that manuscript the authors prove existence results for problems
involving critical and subcritical exponential growth nonlinearities
and $1/2$-Laplacian in a bounded domain. The main ingredient is  the Trudinger-Moser type inequality \cite{Ozawa} (see Proposition~\ref{moser}).
For related problems involving Moser-Trudinger embeddings
we would like to mention 
the celebrated works \cite{M,T} as well as \cite{DMR,adimur,adimur2,Def-cpam,carl,adams,DOR} and the references therein.
As known, Caffarelli and Silvestre \cite{caffarelli} developed a local interpretation of the fractional
Laplacian by considering a Neumann type operator in 
$\R^{N+1}_{+}=\{(x, t) \in \R^{N+1} : t > 0\}$. A similar extension, for nonlocal problems on bounded domain with
the zero Dirichlet boundary condition,  was also studied, see \cite{cabretan}.
The space $\dot H^{1/2}(\R)$
is the completion of $C^{\infty}_{0}(\R)$ under 
\begin{equation*}
[u]_{{1/2}}:=\Big(\int_{\R}|\xi||{\mathcal F} u|^2 \,\ud\xi\Big)^{1/2}=\Big(\int_{\R}|(-\Delta)^{1/4}u|^2 \,\ud x\Big)^{1/2}\!\!\!\!,
\end{equation*}
while $H^{1/2}(\R)$ is the Hilbert space of $u\in L^2(\R)$ such that $[u]_{H^{1/2}}<\infty$, endowed with the norm
$$
\|u\|_{{1/2}}=\big(\|u\|_{L^2}^2+[u]_{{1/2}}^2\big)^{1/2}.
$$
The space $X^{1}(\R^{2}_{+})$ is defined as the completion of $C^{\infty}_{0}(\overline{\R^{2}_{+}})$  under the semi-norm
\begin{equation*}
\|w\|_{X^{1}}:=\Big(\int_{ \R^{2}_{+}}|\nabla w|^2 \,\ud x\ud y\Big)^{1/2}.
\end{equation*}
For a function $u\in \dot H^{1/2}(\R)$, the solution $w \in X^{1}(\R^{2}_{+})$ to 
 \begin{equation}
 \left\{ \begin{array}{rcl}
 -\dive (\nabla w)=0 & \mbox{in}&  \R^{2}_{+}\noindent\\
  w=u& \mbox{on}& \R \times\{0\}\noindent
 \end{array}\right. 
 \end{equation}
is called harmonic  extension $w=E_{1/2}(u)$ of $u$ and it is proved in \cite{caffarelli,colorado} that, up to some constant, 
$$
\lim_{y \to 0^+} \frac{\partial w}{\partial y}(x,y)=-(-\Delta)^{1/2}u(x).
$$
Also, up to a constant, $[u]_{{1/2}}=\|w\|_{X^{1}}$, see  \cite{colorado}.
Our problem \eqref{PS} will be studied in the  half-space, 
 \begin{equation}\label{NPS}
 \left\{ \begin{array}{rcl}
 -\dive ( \nabla w)=0 & \mbox{in}&  \R^{2}_{+}\noindent\\
  -\frac{\partial w}{\partial \nu}=-u +K(x)g(u) & \mbox{on}& \R \times\{0\},\noindent
 \end{array}\right. 
 \end{equation}
where  $\frac{\partial w}{\partial \nu}=\lim_{y \to 0^+} \frac{\partial w}{\partial y}(x,y).$ 
We look for positive solutions in the Hilbert space $E$ defined by
$$
E:=\Big\{ w \in X^{1}(\R^{2}_{+}): \ \int_{\R}w(x,0)^2 \,\ud x < \infty\Big\},
$$
endowed with the norm
$$
\|w\|:= \Big(\int_{ \R^{2}_{+}}|\nabla w|^2 \,\ud x\ud y + \int_{\R}w(x,0)^2 \,\ud x\Big)^{1/2}.
$$
Consider now the energy functional $J:E\to\R$ associated to \eqref{NPS}  given by
\begin{equation}\label{functional}
J(w):=\frac{1}{2}\|w\|^2 - \int_{\mathbb{R}} K(x) G(w(x,0)  \,\ud x,\quad\,\,\,\,   G(s):=\int_{0}^{s}g(t)\,\ud t,
\end{equation}
which, under suitable assumptions, is $C^1$  (see Proposition~\ref{C-uno}) and, for all $w,v\in E$,
\begin{equation}
\label{derivative}
 J'(w)(v) =\int_{ \R^{2}_{+}}\nabla w \cdot\nabla v  \,\ud x \ud y + \int_{\R}w(x,0)v(x,0)  \,\ud x - \int_{\mathbb{R}} K(x) g(w(x,0))v(x,0)  \,\ud x.
\end{equation}
We now formulate assumptions for $K$ and $g$ in order to be able to solve~\eqref{PS}.
\vskip4pt
\noindent
$\bullet$ {\em Assumption on $K$.} 
We assume $K\in L^\infty(\R)\cap C(\R)$. Furthermore, if $\{A_n\}$ is a sequence of Borel sets of $\R$ with $|A_n|\leq R$ for some $R>0$, 
\begin{equation}
\label{decay-K}
\lim_{r\to \infty} \int_{A_n \cap B^{c}_{r}(0)} K(x) \,\ud x =0,\quad\text{uniformly with respect to $n \in \N$.}
\end{equation}
$\bullet$ {\em Assumptions on $g$ -- subcritical case} 
\begin{description}
\item[(g1) (behaviour at zero)]  $g:\R\to\R^+$ is continuous with $g=0$ on $\R^-$ and
$$
\limsup_{s\to 0^+}\frac{g(s)}{s}=0.
$$
\item[(g2) (subcritical growth)] it holds
$$
\limsup_{s\to +\infty} \frac{g(s)}{e^{\alpha s^2}-1}=0, \quad \mbox{for all $\alpha >0$}.
$$
\item[(g3) (super-quadraticity)]  $\frac{g(s)}{s}$ is non-decreasing in $\R^+$ and 
$$
\limsup_{s\to +\infty} \frac{G(s)}{s^{2}}=+\infty.
$$
\end{description}

\noindent
Under assumption~\eqref{decay-K} on $K$, we have the following

\begin{theorem}
Assume {\rm (g1)-(g3)}. Then \eqref{PS} admits a positive solution $u\in H^{1/2}(\R)$.
\label{mainsub}
\end{theorem}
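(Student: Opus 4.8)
The plan is to realize a solution of \eqref{PS} as the trace of a critical point of the $C^1$ functional $J$ in \eqref{functional} (see Proposition~\ref{C-uno}), produced by the mountain pass theorem; the decisive ingredient is a compactness property of the nonlinear term that combines the decay hypothesis \eqref{decay-K} with the Trudinger--Moser inequality of Proposition~\ref{moser}.

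\emph{Mountain pass geometry.} From (g1)--(g2) one has, for each $\alpha>0$, $q>1$ and $\eps>0$, an estimate $G(s)\le\tfrac{\eps}{2}s^2+C_\eps\,|s|^q(e^{\alpha s^2}-1)$ on $\R^+$; choosing $\alpha$ small and combining Proposition~\ref{moser} with H\"older's inequality gives $\int_\R K(x)G(w(x,0))\,\ud x=o(\|w\|^2)$ as $\|w\|\to0$, hence $J(w)\ge\tfrac14\|w\|^2$ on a small sphere $\|w\|=\rho$. On the other hand, since $g(s)/s$ is non-decreasing one has $g(s)s-2G(s)\ge0$ on $\R^+$, so $s\mapsto G(s)/s^2$ is non-decreasing there and, by (g3), tends to $+\infty$; thus, fixing $e_0=E_{1/2}(\phi)$ with $\phi\in C^\infty_0(\R)$, $\phi\ge0$, $\phi\not\equiv0$, we get $J(te_0)\to-\infty$ as $t\to+\infty$. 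Hence $J$ has the mountain pass geometry with minimax level $c\ge\tfrac14\rho^2>0$, and by Cerami's version of the theorem there is a sequence $(w_n)\subset E$ with $J(w_n)\to c$ and $(1+\|w_n\|)\,\|J'(w_n)\|_{E^*}\to0$. Write $u_n:=w_n(\cdot,0)\in H^{1/2}(\R)$.

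\emph{Boundedness of $(w_n)$.} Suppose $\|w_n\|\to\infty$ and set $v_n:=w_n/\|w_n\|\rightharpoonup v$ (up to a subsequence) in $E$. If $v\ne0$ then $|u_n|\to\infty$ on the positive-measure set $\{v(\cdot,0)\ne0\}$, so $K(x)\,v_n(x,0)^2\,G(u_n)/u_n^2\to+\infty$ there and Fatou's lemma gives $\|w_n\|^{-2}\int_\R K(x)G(u_n)\,\ud x\to+\infty$, contradicting $\|w_n\|^{-2}J(w_n)\to0$. Hence $v_n\rightharpoonup0$, and the compactness lemma below yields $\int_\R K(x)G(Rv_n(x,0))\,\ud x\to0$ for every $R>0$. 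Choose $t_n\in[0,1]$ with $J(t_nw_n)=\max_{t\in[0,1]}J(tw_n)$; since $R/\|w_n\|\in(0,1)$ for $n$ large, $J(t_nw_n)\ge J(Rv_n)\to R^2/2$ for every $R$, so $J(t_nw_n)\to+\infty$ and therefore $t_n\in(0,1)$ eventually, whence $J'(t_nw_n)(t_nw_n)=0$. Since $g(s)/s$ is non-decreasing, the map $s\mapsto\tfrac12 g(s)s-G(s)$ is non-negative and non-decreasing on $\R^+$ and vanishes on $\R^-$; as $J(v)-\tfrac12 J'(v)(v)=\int_\R K(x)\big(\tfrac12 g(v(x,0))v(x,0)-G(v(x,0))\big)\,\ud x$ and $0<t_n<1$, we obtain $J(t_nw_n)=J(t_nw_n)-\tfrac12 J'(t_nw_n)(t_nw_n)\le J(w_n)-\tfrac12 J'(w_n)(w_n)\to c$, a contradiction. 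Thus $(w_n)$ is bounded and, up to a subsequence, $w_n\rightharpoonup w$ in $E$, $u_n\rightharpoonup w(\cdot,0)$ in $H^{1/2}(\R)$ and $u_n\to w(\cdot,0)$ a.e.\ in $\R$.

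\emph{The compactness lemma, and conclusion.} The core of the argument — and, I expect, the main obstacle — is the following: if $(\psi_n)$ is bounded in $H^{1/2}(\R)$ with $\psi_n\rightharpoonup\psi$ and $\psi_n\to\psi$ a.e., then $\int_\R K(x)G(\psi_n)\,\ud x\to\int_\R K(x)G(\psi)\,\ud x$, $\int_\R K(x)g(\psi_n)\psi_n\,\ud x\to\int_\R K(x)g(\psi)\psi\,\ud x$, and $\int_\R K(x)g(\psi_n)\varphi\,\ud x\to\int_\R K(x)g(\psi)\varphi\,\ud x$ for all $\varphi\in H^{1/2}(\R)$. One splits $\R=B_r(0)\cup B^c_r(0)$: on $B_r(0)$ the embedding $H^{1/2}(B_r(0))\hookrightarrow L^q(B_r(0))$ ($1\le q<\infty$) is compact, and by Proposition~\ref{moser} the sequences $g(\psi_n)$, $g(\psi_n)\psi_n$ are bounded in every $L^q(B_r(0))$, so a Vitali/generalized dominated convergence argument gives convergence of the local integrals; on $B^c_r(0)$ one dominates $G$, $g$ via (g1)--(g2) by an $\eps$-multiple of a quadratic term plus a term of the form $|s|^q(e^{\alpha s^2}-1)$, notes that $\{|\psi_n|>\delta\}$ has measure bounded uniformly in $n$, and combines \eqref{decay-K} with the uniform Trudinger--Moser bound to make the tail integrals arbitrarily small as $r\to\infty$, uniformly in $n$. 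This is the delicate point, since the exponential growth forces the Trudinger--Moser estimate to enter both the local and the tail part. Granting the lemma, we pass to the limit in $J'(w_n)(v)=o(1)$, using \eqref{derivative} and weak convergence for the linear terms, to get $J'(w)=0$. If $w=0$, the lemma gives $\int_\R K(x)G(u_n)\,\ud x\to0$ and $\int_\R K(x)g(u_n)u_n\,\ud x\to0$, so $\tfrac12\|w_n\|^2\to c$ while $\|w_n\|^2=J'(w_n)(w_n)+\int_\R K(x)g(u_n)u_n\,\ud x\to0$, impossible since $c>0$; hence $w\ne0$. Testing $J'(w)=0$ with functions that vanish on $\R\times\{0\}$ shows $-\Delta w=0$ in $\R^{2}_{+}$, so $w=E_{1/2}(u)$ with $u:=w(\cdot,0)\in H^{1/2}(\R)$ a weak solution of \eqref{PS}, and $u\not\equiv0$ because $w\ne0$ is harmonic. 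Finally, testing $J'(w)=0$ with $-w^-$ and using $g\equiv0$ on $\R^-$ yields $\int_{\R^{2}_{+}}|\nabla w^-|^2\,\ud x\,\ud y+\int_\R w^-(x,0)^2\,\ud x=0$, so $w\ge0$ and $u\ge0$; the strong maximum principle for $(-\Delta)^{1/2}$ then gives $u>0$ in $\R$, which completes the proof.
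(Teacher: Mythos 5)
Your proof is correct and follows essentially the same route as the paper's: mountain pass geometry for $J$, a Cerami sequence, boundedness via the monotonicity of $s\mapsto \tfrac12 g(s)s-G(s)$ (the Jeanjean-type argument that the paper delegates to \cite{JMO}), the $K$-weighted compactness lemma (the paper's Proposition~\ref{convergeG1}) built on \eqref{decay-K} and Ozawa's inequality, and passage to the limit. The only visible difference is cosmetic: you show $w\ne0$ by assuming $w=0$ and deriving $\|w_n\|^2\to 0$ and $\tfrac12\|w_n\|^2\to c$ simultaneously, whereas the paper proves $\|w_n\|\to\|w\|$ via \eqref{Toscana}, deduces strong convergence $w_n\to w$, and concludes $J(w)=c>0$; the two are interchangeable.

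One small point worth tightening in the boundedness step: when $v\ne0$ you invoke Fatou on $K\,v_n^2\,G(u_n)/u_n^2$ over $\{v(\cdot,0)\ne0\}$, but $G\equiv0$ on $\R^-$, so on $\{v(\cdot,0)<0\}$ the integrand is eventually zero and contributes nothing. You need $|\{v(\cdot,0)>0\}|>0$. This follows because $J'(w_n)(-w_n^-)=-\|w_n^-\|^2\to 0$ (since $g\equiv 0$ on $\R^-$ kills the nonlinear term), hence $\|v_n^-\|=\|w_n^-\|/\|w_n\|\to 0$ and $v\ge0$; this is exactly the ``$\|w_n^-\|\to0$'' assertion that the paper records in Lemma~\ref{cerami}, and should be stated before applying Fatou.
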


\noindent
$\bullet$ {\em Assumptions on $g$ -- critical case} 
\begin{description}
\item[(g2)$'$ (critical growth)] there exists $\omega\in (0,\pi]$ and $\alpha_0\in (0,\omega]$
\begin{align*}
\limsup_{s\to +\infty} \frac{g(s)}{e^{\alpha s^2}-1}&=0, \quad \mbox{for all $\alpha >\alpha_0$}, \\
\limsup_{s\to +\infty} \frac{g(s)}{e^{\alpha s^2}-1}&=+\infty, \quad\mbox{for all $\alpha <\alpha_0$}.
\end{align*}
\item[(g3)$'$ (super-quadraticity)] $\frac{g(s)}{s}$ is non-decreasing in $\R^+$ and there are $q>2$ and $C_q>0$ with
$$
G(s)\geq C_q s^q, \quad \text{for all $s\in\R^+$}.
$$
\item[(AR) (Ambrosetti-Rabinowitz)] there exists $\theta>2$ such that
\begin{equation}
 \theta G(s)\leq sg(s), \quad\text{ for all $s\in\R^+$}.
\tag{AR}
\end{equation}
\end{description}

\vskip2pt
\noindent
Under assumption~\eqref{decay-K} on $K$, we also have the following

\begin{theorem}
\label{main}
Assume {\rm (g1)-(g2)$'$-(g3)$'$} and (AR).
Then \eqref{PS} has a positive solution $u\in H^{1/2}(\R)$ provided that the constant $C_q$ in condition {\rm (g3)$'$}
is sufficiently large.
\end{theorem}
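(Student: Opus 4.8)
The plan is to obtain the solution of \eqref{PS} as a mountain pass critical point of the $C^1$ functional $J$ from \eqref{functional} (Proposition~\ref{C-uno}), working in the extension space $E$ and reading off $u=w(\cdot,0)\in H^{1/2}(\R)$ only at the very end. \emph{Step 1 (geometry).} From (g1) and the critical growth (g2)$'$, for every $\alpha>\alpha_0$ and every $q>2$ one has $G(s)\le\epsilon s^2+C_{\epsilon,\alpha,q}\,(e^{\alpha s^2}-1)|s|^{q}$ for $s\ge 0$. Restricting to a sphere $\|w\|=\rho$ with $\rho$ so small that $\alpha\rho^2$ stays below the Trudinger--Moser threshold, the continuity of the trace $E\hookrightarrow H^{1/2}(\R)$, H\"older's inequality and Proposition~\ref{moser} give $\int_{\R}K(x)G(w(x,0))\,\ud x=o(\|w\|^2)+O(\|w\|^{q})$, hence $J(w)\ge\beta>0$ on $\|w\|=\rho$. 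On the other hand, fixing $w_0\in E\setminus\{0\}$ with $w_0\ge0$, condition (g3)$'$ (already (AR)) makes $J(tw_0)\to-\infty$ as $t\to+\infty$, so there is $e\in E$ with $\|e\|>\rho$ and $J(e)<0$. This produces the mountain pass level $c:=\inf_{\gamma\in\Gamma}\max_{t\in[0,1]}J(\gamma(t))\ge\beta>0$, where $\Gamma$ is the usual class of paths joining $0$ to $e$.

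\emph{Step 2 (level estimate).} The decisive point is to show that $c$ stays strictly below the compactness threshold $\tfrac{\omega}{2\alpha_0}$ built into Proposition~\ref{moser}. I would test with the harmonic extensions $w_n=E_{1/2}(u_n)$ of the truncated fractional Moser functions $u_n$ concentrating at a point $x_0$ with $K(x_0)>0$, for which $\|w_n\|^2=1+O\big((\log n)^{-1}\big)$; maximising $t\mapsto J(tw_n)$ and using the polynomial lower bound $G(s)\ge C_q s^{q}$ of (g3)$'$ to bound $\int_{\R}K(x)G(tw_n(x,0))\,\ud x$ from below, one obtains $\max_{t\ge0}J(tw_n)<\tfrac{\omega}{2\alpha_0}$ for $n$ large, \emph{provided} $C_q$ exceeds an explicit constant. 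This is precisely where the hypothesis on the size of $C_q$ is used, and this optimisation is the most delicate computation in the argument.

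\emph{Step 3 (Palais--Smale and conclusion).} Let $(w_n)\subset E$ be a $(PS)_c$ sequence. Combining $J(w_n)-\tfrac1\theta J'(w_n)(w_n)$ with (AR) bounds $\|w_n\|$, so, up to a subsequence, $w_n\rightharpoonup w$ in $E$ with $w_n(\cdot,0)\to w(\cdot,0)$ a.e.\ and in $L^{s}_{\mathrm{loc}}(\R)$. The level bound from Step~2, together with a Lions-type concentration argument and Proposition~\ref{moser}, yields a uniform $L^1$ bound on $e^{\alpha w_n^2}$ for some $\alpha>\alpha_0$; invoking \eqref{decay-K} exactly as in \cite{JMO}, this gives $\int_{\R}K g(w_n(\cdot,0))v(\cdot,0)\,\ud x\to\int_{\R}K g(w(\cdot,0))v(\cdot,0)\,\ud x$ for every $v\in E$ and $\int_{\R}K G(w_n(\cdot,0))\,\ud x\to\int_{\R}K G(w(\cdot,0))\,\ud x$, whence $J'(w)=0$. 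If $w=0$, the nonlinear terms vanish along the sequence, forcing $\|w_n\|^2\to 2c<\omega/\alpha_0$ and then, by the subcritical behaviour on a sphere of that radius, $\|w_n\|^2\to 0$, contradicting $c>0$; hence $w\neq0$. Finally, testing $J'(w)=0$ against the negative part $w^-$ and using $g\equiv0$ on $\R^-$ gives $w\ge0$, and the strong maximum principle for $(-\Delta)^{1/2}$ upgrades $u=w(\cdot,0)$ to a \emph{positive} solution of \eqref{PS} in $H^{1/2}(\R)$.

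The principal obstacle is the interaction between Steps~2 and~3: one must calibrate the Moser test functions and the constant $C_q$ so that the mountain pass level falls strictly below $\tfrac{\omega}{2\alpha_0}$, and then transfer this strict inequality into equiintegrability of the exponential terms along an \emph{arbitrary} Palais--Smale sequence — the characteristic difficulty of critical exponential problems, here compounded by the nonlocal operator and by the possibly vanishing weight $K$, which is what forces the use of \eqref{decay-K}.
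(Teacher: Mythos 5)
Your proposal follows the \emph{classical} template for critical Trudinger--Moser problems, which is a genuinely different route from the paper's. In Step~2 you estimate the mountain pass level against an intrinsic threshold $\omega/(2\alpha_0)$ by testing with concentrating (truncated Moser) functions, and in Step~3 you invoke a Lions-type concentration analysis to transfer the strict level inequality into equi-integrability of $e^{\alpha w_n^2}$. The paper does neither. Its key structural idea (Lemma~\ref{boundmiuno}) is much more elementary: fixing one compactly supported $\psi$, the polynomial lower bound $G(s)\ge C_q s^q$ gives
$$
c\le\max_{t\ge0}\Big(\tfrac{t^2}{2}\|\psi\|^2-C_qL\,t^q\|\psi(\cdot,0)\|_{L^q}^q\Big)
=\frac{q-2}{2q}\,\frac{\S_q^{2q/(q-2)}}{(qC_qL)^{2/(q-2)}}\longrightarrow 0\quad\text{as }C_q\to\infty,
$$
and combining this with $c\ge\frac{\theta-2}{2\theta}\limsup_n\|w_n\|^2$ (from (AR) along a Palais--Smale sequence) yields $\sup_n\|w_n\|<1$ for $C_q$ large. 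Once this holds, Ozawa's inequality applies directly to the sequence (Lemma~\ref{boundmoser}), and Proposition~\ref{convergeG2} delivers the compactness without any concentration argument. What the paper's approach buys is a drastic simplification: no Moser-function calibration, no Lions lemma, no threshold comparison; the hypothesis ``$C_q$ large'' is used once, purely to shrink the level. What your approach would potentially buy (if carried out) is that it reflects the standard mechanism in the literature and would more transparently explain why a lower bound on the size of the nonlinearity is the natural assumption; but it is also strictly harder.

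I would flag one genuine concern about your Step~2 as written: the delicate Moser-function optimisation in the exponential-critical literature normally exploits the \emph{exponential} lower growth of $g$ near infinity, whereas you propose to run it using only the polynomial lower bound $G(s)\ge C_q s^q$. It is not clear the inequality $\max_t J(tw_n)<\omega/(2\alpha_0)$ would actually follow that way, nor is it clear how ``$C_q$ exceeds an explicit constant'' enters such a computation in a usable form. The paper avoids this entirely; if you want to pursue your route, this is the gap that needs to be filled. Also note that the threshold you would actually want is not $\omega/(2\alpha_0)$ but $\frac{\theta-2}{2\theta}$ (so that $\limsup\|w_n\|^2<1$), and these agree only in the limit $\theta\to\infty$; the paper's Lemma~\ref{boundmiuno} makes this explicit. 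Finally, your ``if $w=0$'' contradiction is phrased via $\|w_n\|^2\to 2c$ and a subcritical-sphere argument, whereas the paper's is shorter: from Proposition~\ref{convergeG2} both $\int KG(w_n)\,\ud x$ and $\int Kg(w_n)w_n\,\ud x$ tend to $0$, hence $\frac12\|w_n\|^2\to c$ and simultaneously $\|w_n\|^2\to 0$, forcing $c=0$, a contradiction.
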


\noindent
The above results extend the existence results obtained in \cite{Antonio} in the case where the problem
is set on the whole $\R$ and compactness issues have to be tackled. Also, they constitute an extension 
to the results of \cite{JMO} to the case where the nonlinearity is allowed for an exponential growth,
critical or subcritical with respect to the Trudinger-Moser inequality \eqref{I1}. As potentials $K$
satisfying \eqref{decay-K}, one can consider $K$s with $K(x)\to 0$ as $|x|\to\infty$. As examples of nonlinearities
satisfying the above assumptions, define $g:\R\to\R^+$ by setting
$g(t)=0$ for all $t\leq 0$ and 
\[
g(t)=\left\{
	\begin{array}{ll}
		t^q  & \mbox{if } 0\leq t\leq 1, \\
	    t^q e^{t^r-1} & \mbox{if } t\geq 1,
	\end{array}
\right.
\qquad 1<r<2,\,\,\,\, q>1,
\]
This function satisfies {\rm (g1)-(g3)}. Define $g:\R\to\R^+$ by setting $g(t)=0$ for all $t\leq 0$ and 
\[g(t)=C_q\left\{
	\begin{array}{ll}
		 t^q  & \mbox{if } 0\leq t\leq 1, \\
	     t^q e^{\alpha_0 (t^2-1)} & \mbox{if } t>1,
	\end{array}
	\quad q>2,
\right.
\]
where $\alpha_0\in (0,\omega]$ and $C_q>0$ is sufficiently large. This map satisfies {\rm (g1)-(g2)$'$-(g3)$'$} and (AR).

\smallskip
\section{Preliminary results}

\noindent
In this section we provide some preliminary stuff. Consider the weighted Banach space
$$
L^{p}_{K}(\R)=\Big\{ u:\R \to \R\,\, \mbox{measurable:}\ \int_{\R}K(x)|u|^{p} \, \ud x < \infty\Big\},\quad p\in (1,\infty),
$$
endowed with the norm
$$
\|u\|_{L^{p}_{K}}=\Big(\int_{\R}K(x)|u|^{p} \, \ud x\Big)^{1/p}.
$$
\noindent
The first result, is a compact injection for the space $E$.

\begin{proposition}
\label{converge}
$E$ is compactly embedded  into $L^q_{K}(\R)$  for all $q\in (2,\infty)$.
\end{proposition}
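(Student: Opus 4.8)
The plan is to prove the compact embedding $E \hookrightarrow L^q_K(\R)$ for $q \in (2,\infty)$ in two stages: first establish that the embedding is well-defined and continuous, then upgrade continuity to compactness by a truncation/tail-splitting argument that exploits the decay hypothesis \eqref{decay-K} on $K$. For the continuous embedding, I would recall that the trace map $w \mapsto w(\cdot,0)$ sends $E$ into $H^{1/2}(\R)$ with $\|w(\cdot,0)\|_{1/2} \lesssim \|w\|$ (indeed $[w(\cdot,0)]_{1/2} = \|w(\cdot,0)\|_{X^1} \leq \|w\|$ and the $L^2$ term is built into $\|w\|$), and then use the fractional Sobolev embedding $H^{1/2}(\R) \hookrightarrow L^q(\R)$ for every $q \in [2,\infty)$ together with $K \in L^\infty(\R)$ to get $\|w(\cdot,0)\|_{L^q_K} \leq \|K\|_\infty^{1/q}\|w(\cdot,0)\|_{L^q} \lesssim \|w\|$.

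For compactness, let $(w_n)$ be a bounded sequence in $E$, say $\|w_n\| \leq M$, and pass to a subsequence with $w_n \rightharpoonup w$ in $E$; by the continuous embedding we may also assume $w_n(\cdot,0) \rightharpoonup w(\cdot,0)$ in $H^{1/2}(\R)$ and, by the Rellich–Kondrachov theorem for fractional Sobolev spaces on bounded intervals, $w_n(\cdot,0) \to w(\cdot,0)$ strongly in $L^q(B_r)$ and a.e.\ on $\R$ for every $r>0$. Writing $v_n := w_n(\cdot,0) - w(\cdot,0)$, it suffices to show $\int_\R K|v_n|^q\,\ud x \to 0$. Split this integral over $B_r(0)$ and $B_r^c(0)$. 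On $B_r(0)$ the integrand tends to $0$ in $L^1$ by the strong local convergence and $K \in L^\infty$. For the tail, fix $\eta > 0$; choose $t$ large and set $A_n := \{x \in B_r^c(0) : |v_n(x)| \leq t\}$ and its complement. On $A_n$ one bounds $\int_{A_n} K|v_n|^q \leq t^{q-2}\int_{B_r^c} K|v_n|^2$, which is small for $r$ large once we know $\int_{B_r^c} K|v_n|^2 \to 0$ uniformly in $n$. On $B_r^c \setminus A_n$, Chebyshev gives $|B_r^c \setminus A_n| \leq \|v_n\|_{L^2}^2/t^2 \leq C/t^2 =: R$, a uniform bound independent of $n$ and $r$; then \eqref{decay-K} applied to this family of sets yields $\int_{B_r^c \setminus A_n} K \to 0$ as $r \to \infty$ uniformly in $n$, and one controls $\int_{B_r^c\setminus A_n} K|v_n|^q$ using an $L^\infty$-in-$x$ estimate or Hölder between a higher $L^{q'}$-norm (bounded by $\|v_n\|_{1/2}$) and the small measure/small $\int K$.

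The main obstacle is the tail estimate: unlike the case $q=2$, the power $|v_n|^q$ is not directly controlled by the $H^{1/2}$-norm in an $L^1_K$ fashion, so one must interpolate. The cleanest route is to first prove the auxiliary fact that $\int_{B_r^c} K|v_n|^2 \,\ud x \to 0$ uniformly in $n$ as $r\to\infty$ — this is exactly where hypothesis \eqref{decay-K} enters, via the level-set decomposition above with $q$ replaced by $2$ — and then bootstrap to general $q$ by writing $|v_n|^q = |v_n|^2 |v_n|^{q-2}$ and splitting according to whether $|v_n|$ is large or small, using that the large-level set has uniformly bounded measure so that \eqref{decay-K} applies again. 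Once $\int_\R K|v_n|^q \to 0$ is established, we conclude $w_n(\cdot,0) \to w(\cdot,0)$ in $L^q_K(\R)$, which is the claimed compactness.
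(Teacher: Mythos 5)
Your overall architecture (continuous embedding via the trace and fractional Sobolev inequalities, local strong convergence on balls, and a tail estimate on $B_r^c$ that exploits \eqref{decay-K} through level sets of uniformly bounded measure) is the right one and matches the paper's strategy. However, there is a genuine gap in the tail estimate: you reduce everything to the auxiliary claim that $\int_{B_r^c}K|v_n|^2\,\ud x\to 0$ uniformly in $n$, and you assert this follows ``via the level-set decomposition above with $q$ replaced by $2$.'' That reduction fails. The whole point of the level-set trick is the gain of a factor $s_0^{q-2}$ on the low region $\{|v_n|\le s_0\}$: there one has $K|s|^q\le \|K\|_\infty s_0^{q-2}|s|^2$, which is $\le\eps|s|^2$ only because $q>2$ and $s_0$ can be taken small. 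For $q=2$ this factor is $1$, the low region has infinite measure (so \eqref{decay-K} does not apply to it), and you are left with $\|K\|_\infty\int_{B_r^c}|v_n|^2$, which is not small. This is precisely why the proposition excludes $q=2$: the hypothesis \eqref{decay-K} alone is not known to give a weighted $L^2_K$ tail estimate, so your bootstrap rests on a claim that is at best unproven and in any case cannot be obtained by the method you indicate. The same issue infects your two-region split with a \emph{large} threshold $t$: the set $\{|v_n|\le t\}\cap B_r^c$ contains the small-amplitude region, and the bound $t^{q-2}\int_{B_r^c}K|v_n|^2$ (with $t^{q-2}$ large) is useless without the auxiliary fact.

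The fix — and the paper's actual argument — is a \emph{three}-region split performed directly at exponent $q$, avoiding any weighted $L^2_K$ estimate. Choose $s_0(\eps)$ small, $s_1(\eps)$ large and $r>q$ so that
$$
K(x)|s|^q\le \eps C_0\big(|s|^2+|s|^{r}\big)+C(\eps)K(x)\chi_{[s_0(\eps),s_1(\eps)]}(|s|)|s|^q ,
$$
using $K\in L^\infty$, $|s|^q\le s_0^{q-2}|s|^2$ for $|s|\le s_0$ and $|s|^q\le s_1^{q-r}|s|^{r}$ for $|s|\ge s_1$. The first term is absorbed into $\eps$ times the \emph{unweighted} $L^2$ and $L^{r}$ norms, which are uniformly bounded by the Sobolev embedding of $H^{1/2}(\R)$; only the middle level set $A^n_\eps=\{s_0\le|w_n(\cdot,0)|\le s_1\}$, whose measure is uniformly bounded by Chebyshev, requires \eqref{decay-K}. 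With this modification (and your correct local argument on $B_{r(\eps)}$), the proof closes. One further minor point: you work with $v_n=w_n(\cdot,0)-w(\cdot,0)$ and show $\int K|v_n|^q\to 0$ directly, whereas the paper shows convergence of the norms $\int K|w_n|^q\to\int K|w|^q$ and concludes from there; your formulation is fine, but note that the pointwise bounds above must then be applied to $v_n$, whose $E$-norm is still only bounded, not small, so nothing is gained by the subtraction.
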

\begin{proof}
Let $q>2$, $r>q$ and $\epsilon >0$. Then, there exist $0<s_0(\eps)<s_1(\eps)$, a positive constant 
$C(\eps)$ and $C_0$ depending only on $K$, such that
\begin{equation}
\label{2.4}
 K(x)|s|^q\leq \epsilon C_0 (|s|^2 + |s|^{r})+C(\eps)K(x)\chi_{[s_0(\eps),s_1(\eps)]}(|s|)|s|^{q}, \quad  x,s\in \R.
\end{equation}
Therefore we obtain, for every $w \in E$ and $r>0$,
\begin{equation}\label{2.5}
 \int_{B^{c}_{r}(0)} K(x)|w(x,0)|^q \,\ud x \leq \epsilon Q(w)+ C(\eps)s_1(\eps)^{q} \int_{A_\eps\cap B^{c}_{r}(0)} K(x) \,\ud x,
\end{equation}
where we have set
\begin{equation}
\label{Q-A}
Q(w):=   C_0 \|w(\cdot,0)\|^2_{L^2} + C_0\|w(\cdot,0)\|^{r}_{L^r},\quad 
A_\eps :=  \left\{x\in \R:  s_0(\eps) \leq |w(x,0)|\leq s_1(\eps)\right\}.
\end{equation}
If $(w_n)\subset E$ is such that $w_n \rightharpoonup w$ weakly in $E$ for some $w\in E$, there exists $M>0$ such that 
\begin{equation}
\label{boundednesses}
\begin{aligned}
 \int_{\R^{2}_{+}} |\nabla w_n|^2 \,\ud x \ud y + \int_{\R}|w_n(x,0)|^2 \,\ud x \leq & M, \\ 
  \int_{\R}|w_n(x,0)|^{r} \, \ud x\leq & M,\quad \text{for all $r\geq 2$}.
  \end{aligned}
\end{equation}
The second inequality is due to the continuous injection of $H^{1/2}(\R)$ in an
arbitrary $L^r(\R)$ space with $r\geq 2$, see \cite[Theorem 6.9]{nezza}. Hence $Q(w_n)$ is bounded. On the other hand, if  
$$
A^n_\eps:=\big\{x\in\R:s_0(\eps) \leq |w_n(x,0)|\leq s_1(\eps)\big\},
$$  
we get 
$$
s_0(\eps)^{q}|A^n_\eps|\leq \int_{A^n_\eps}|w_n(x,0)|^{q} \,\ud x \leq \int_{\R^N}|w_n(x,0)|^{q} \,\ud x\leq M, \quad \text{for all $n\in \N$}. 
$$
which implies that $\sup_{n\in \N} |A^n_\eps|<+\infty$.
Then, in light of \eqref{decay-K}, there exists $r(\eps)>0$ such that
\begin{equation}\label{2.6}
 \int_{A^n_\eps\cap B^{c}_{r(\eps)}(0)} K(x) \,\ud x <\frac{\epsilon}{C(\eps) s_1(\eps)^{q}},\quad \text{for all $n\in \N$}. 
\end{equation}
Whence, in light of \eqref{2.5}, we conclude
\begin{equation}
\label{2.7}
 \int_{B^{c}_{r(\eps)}(0)} K(x)|w_n(x,0)|^q \,\ud x \leq  (2C_0M+1)\eps. 
\end{equation}
By the fractional compact embedding \cite[Theorem 7.1]{nezza}, we have
\begin{equation}\label{2.8}
 \lim_n \int_{B_{r(\eps)}(0)} K(x)|w_n(x,0)|^q \,\ud x=\int_{B_{r(\eps)}(0)} K(x)|w(x,0)|^q \,\ud x.
\end{equation}
Combining \eqref{2.7}-\eqref{2.8}, yields 
$$
\lim_{n}\int_{\R} K(x)|w_n(x,0)|^q \,\ud x=\int_{\R} K(x)|w(x,0)|^q \,\ud x.
$$
This concludes the proof.
\end{proof}

\noindent
Let us now recall the Trudinger Moser type inequality of \cite{Ozawa}.

\begin{proposition}\label{moser}
 There exists $0 < \omega \leq \pi$ such that, for all $\alpha \in (0, \omega)$, there exists $H_{\alpha}>0$ with
\begin{equation}\label{I1}
\int_{\R} (e^{\alpha u^2}-1) \, \ud x \leq H_{\alpha}\|u\|^{2}_{L^2},
\end{equation}
for all $u\in H^{1/2}(\R)$ with $\|(-\Delta)^{1/4} u\|^2_{L^2}\leq 1$.
\end{proposition}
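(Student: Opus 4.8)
The plan is to reduce this exponential inequality to a one-parameter family of Gagliardo--Nirenberg inequalities in which the constants are controlled \emph{quantitatively}, and then to sum the Taylor series of $e^{\alpha u^2}-1$ term by term. The key intermediate estimate is that there is an absolute constant $C>0$ with
\begin{equation*}
\|u\|_{L^{q}(\R)}\le C\,q^{1/2}\,\|(-\Delta)^{1/4}u\|_{L^2(\R)}^{\,1-2/q}\,\|u\|_{L^2(\R)}^{\,2/q},\qquad \text{for all }q\in[2,\infty)\ \text{and all}\ u\in H^{1/2}(\R).
\end{equation*}
The inequality itself is merely the continuous embedding $H^{1/2}(\R)\hookrightarrow L^{q}(\R)$ (see \cite[Theorem~6.9]{nezza}); the essential point is that its constant grows no faster than $q^{1/2}$ as $q\to\infty$.

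To obtain this estimate I would use a Littlewood--Paley decomposition $u=\sum_{j}\Delta_j u$ and split at a frequency scale $2^{J}$ to be chosen. On the low frequencies $j\le J$, Bernstein's inequality gives $\|\Delta_j u\|_{L^q}\lesssim 2^{j(1/2-1/q)}\|\Delta_j u\|_{L^2}$, and summing in $j$ by Cauchy--Schwarz one obtains $\lesssim 2^{J(1/2-1/q)}\|u\|_{L^2(\R)}$ (the geometric ratio $2^{1-2/q}$ stays $\ge\sqrt2$ for $q\ge 4$, so the resulting prefactor is bounded, without any growth in $q$). On the high frequencies $j>J$ one writes $\|\Delta_j u\|_{L^q}\lesssim 2^{j(1/2-1/q)}\|\Delta_j u\|_{L^2}\approx 2^{-j/q}\,\|(-\Delta)^{1/4}\Delta_j u\|_{L^2}$ and sums by Cauchy--Schwarz; because $1-2^{-2/q}\approx (2\ln 2)/q$, the tail $\big(\sum_{j>J}2^{-2j/q}\big)^{1/2}$ is of order $q^{1/2}2^{-J/q}$, so this part is $\lesssim q^{1/2}\,2^{-J/q}\,\|(-\Delta)^{1/4}u\|_{L^2(\R)}$. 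Adding the two pieces and choosing $2^{J}$ to balance them --- that is, $2^{J}\sim q\,\|(-\Delta)^{1/4}u\|_{L^2}^2/\|u\|_{L^2}^2$ --- yields precisely the stated homogeneity $\|(-\Delta)^{1/4}u\|^{1-2/q}\|u\|_{L^2}^{2/q}$ together with a constant of order $q^{1/2}$. (Equivalent routes: the borderline Riesz-potential representation $u=c\,|x|^{-1/2}\ast(-\Delta)^{1/4}u$ with the kernel truncated and the radius optimised; interpolating $\dot H^{1/2}(\R)\hookrightarrow BMO(\R)$ against $L^2$ through John--Nirenberg; or passing to the harmonic extension and invoking a trace form of the two-dimensional Moser--Trudinger inequality.)

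Granting the Gagliardo--Nirenberg estimate, fix $u\in H^{1/2}(\R)$ with $\|(-\Delta)^{1/4}u\|_{L^2}\le 1$. All terms of the series $e^{\alpha u^2}-1=\sum_{k\ge 1}\alpha^{k}u^{2k}/k!$ are nonnegative, hence by Tonelli's theorem and the estimate applied with $q=2k$,
\begin{align*}
\int_{\R}\big(e^{\alpha u^2}-1\big)\,\ud x
&=\sum_{k=1}^{\infty}\frac{\alpha^{k}}{k!}\,\|u\|_{L^{2k}(\R)}^{2k}
\le\sum_{k=1}^{\infty}\frac{\alpha^{k}}{k!}\,C^{2k}(2k)^{k}\,\|(-\Delta)^{1/4}u\|_{L^2(\R)}^{2k-2}\,\|u\|_{L^2(\R)}^{2}\\
&\le\|u\|_{L^2(\R)}^{2}\sum_{k=1}^{\infty}\frac{\alpha^{k}C^{2k}(2k)^{k}}{k!}.
\end{align*}
Since $k!\ge (k/e)^{k}$ we have $(2k)^{k}/k!\le (2e)^{k}$, so the last series is dominated by $\sum_{k\ge 1}(2eC^{2}\alpha)^{k}$, which converges exactly when $\alpha<1/(2eC^{2})$. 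Therefore \eqref{I1} holds with $H_\alpha:=\sum_{k\ge 1}(2eC^{2}\alpha)^{k}<\infty$, and it suffices to take $\omega:=\min\{\pi,\,1/(2eC^{2})\}$.

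The entire difficulty lies in the Gagliardo--Nirenberg step: the fact that the embedding constant grows like $q^{1/2}$ and not faster is exactly what forces the exponential series to converge, and it cannot be extracted from a soft use of the Sobolev embedding --- it requires the quantitative frequency splitting (or one of the equivalent arguments) above. Everything downstream is a Taylor expansion followed by Stirling's bound. (Identifying the \emph{sharp} threshold $\omega=\pi$ would additionally need a matching lower bound built from a concentrating Moser-type family, but that is not required here: any admissible $\omega\in(0,\pi]$ suffices for the applications in this paper.)
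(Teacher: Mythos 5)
The paper does not prove Proposition~\ref{moser}: it is stated as a known result and attributed to Ozawa~\cite{Ozawa}. Your argument is correct and is, in its essentials, a faithful reconstruction of Ozawa's proof. The whole content is the quantitative Gagliardo--Nirenberg inequality $\|u\|_{L^q(\R)}\le C\sqrt q\,\|(-\Delta)^{1/4}u\|_{L^2}^{1-2/q}\,\|u\|_{L^2}^{2/q}$ with an absolute constant $C$; once one has the $O(\sqrt q)$ growth rate, the termwise Taylor expansion of $e^{\alpha u^2}-1$ with $q=2k$ and the bound $k!\ge(k/e)^k$ give \eqref{I1} for every $\alpha<1/(2eC^2)$, exactly as you write. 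Your Littlewood--Paley derivation of the $\sqrt q$ bound is sound: Bernstein on each dyadic block, Cauchy--Schwarz on both sides of the cut, and balancing at $2^J\sim q\,\|(-\Delta)^{1/4}u\|_{L^2}^2/\|u\|_{L^2}^2$ reproduce the correct homogeneity with the $q^{1/2-1/q}\le q^{1/2}$ prefactor. Two small points worth making explicit: the geometric-sum control of the low-frequency piece, as written, requires $q\ge4$ (so the ratio stays $\ge\sqrt2$), and $2\le q<4$ must be absorbed by the plain continuous embedding $H^{1/2}(\R)\hookrightarrow L^q(\R)$, whose constant is uniform on a compact range of $q$; and, as you note, this route does not identify the sharp threshold $\omega=\pi$, but the statement only asks for some $\omega\in(0,\pi]$, so taking $\omega=\min\{\pi,1/(2eC^2)\}$ is entirely adequate for the applications in the paper.
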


\noindent
Next, we state a useful Trudinger-Moser type bound for bounded sequences of $E$.

\begin{lemma}
\label{boundmoser}
Let $(w_n)\subset E$ be a bounded sequence and set $\sup_{n\in\N}\|w_n\|=M$. Then
$$
\sup_{n\in\N}\int_{\R} (e^{\alpha w_n(x,0)^2}-1) \,\ud x<\infty,\quad\text{for every $0<\alpha<\frac{\omega}{M^2}$};
$$
In particular, if $M\in (0,1)$, there exists $\alpha_M>\omega$ such that
$$
\sup_{n\in\N}\int_{\R} (e^{\alpha_M w_n(x,0)^2}-1) \,\ud x<\infty.
$$
\end{lemma}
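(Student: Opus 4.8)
The plan is to deduce the bound from Ozawa's inequality, Proposition~\ref{moser}, after rescaling each $w_n$ so that its Dirichlet energy is at most one. First I would dispose of the trivial case $M=0$ (then $w_n\equiv 0$ and nothing is to prove), and, having fixed $n$ with $w_n\neq 0$ (if $w_n=0$ the corresponding integral vanishes), I would write $u_n:=w_n(\cdot,0)\in H^{1/2}(\R)$ and recall from the Introduction that the harmonic extension minimizes the Dirichlet energy among all extensions sharing its trace, so that
\begin{equation*}
\|(-\Delta)^{1/4}u_n\|_{L^2}^2=[u_n]_{1/2}^2=\|E_{1/2}(u_n)\|_{X^{1}}^2\le \int_{\R^{2}_{+}}|\nabla w_n|^2\,\ud x\,\ud y\le \|w_n\|^2 .
\end{equation*}
Setting $z_n:=u_n/\|w_n\|$, this gives $\|(-\Delta)^{1/4}z_n\|_{L^2}^2\le 1$, so that Proposition~\ref{moser} applies to $z_n$; moreover, since $\|w_n\|^2=\int_{\R^{2}_{+}}|\nabla w_n|^2\,\ud x\,\ud y+\|u_n\|_{L^2}^2\ge \|u_n\|_{L^2}^2$, also $\|z_n\|_{L^2}^2\le 1$.

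Next I would exploit the hypothesis $0<\alpha<\omega/M^2$, which guarantees $\alpha M^2\in(0,\omega)$. Combining the pointwise estimate $\alpha\,w_n(x,0)^2=\alpha\|w_n\|^2\,z_n(x,0)^2\le \alpha M^2\,z_n(x,0)^2$ with the monotonicity of $t\mapsto e^{t}-1$ on $[0,\infty)$ gives $e^{\alpha w_n(x,0)^2}-1\le e^{\alpha M^2 z_n(x,0)^2}-1$; integrating over $\R$ and applying \eqref{I1} with the admissible parameter $\alpha M^2$ then yields
\begin{equation*}
\int_{\R}\big(e^{\alpha w_n(x,0)^2}-1\big)\,\ud x\le \int_{\R}\big(e^{\alpha M^2 z_n(x,0)^2}-1\big)\,\ud x\le H_{\alpha M^2}\,\|z_n\|_{L^2}^2\le H_{\alpha M^2},
\end{equation*}
a bound independent of $n$; taking the supremum proves the first claim. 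For the last assertion, if $M\in(0,1)$ then $\omega/M^2>\omega$, so any $\alpha_M\in(\omega,\omega/M^2)$ is admissible in what was just established, giving the stated bound with exponent strictly larger than $\omega$.

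The argument is essentially a normalization trick, so I do not expect a serious obstacle. The only delicate point -- and the only place the structure of the extension enters -- is the first displayed chain, namely that the full $E$-norm controls $\|(-\Delta)^{1/4}w_n(\cdot,0)\|_{L^2}$: this rests on the minimality of the harmonic extension $E_{1/2}$ recalled in the Introduction, equivalently on the trace inequality $[w(\cdot,0)]_{1/2}\le\|w\|_{X^{1}}$. The reason one divides by $\|w_n\|$ rather than by $\|(-\Delta)^{1/4}u_n\|_{L^2}$ is precisely that the former choice also keeps $\|z_n\|_{L^2}\le 1$, which is what prevents the right-hand side of \eqref{I1} from degenerating when $\|(-\Delta)^{1/4}u_n\|_{L^2}$ happens to be small.
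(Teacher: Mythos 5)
Your proof is correct and follows essentially the same route as the paper: normalize by $\|w_n\|$, note that $0<\alpha M^2<\omega$, and apply Ozawa's inequality \eqref{I1} to the rescaled trace. Your explicit use of the trace inequality $[w_n(\cdot,0)]_{1/2}\le\|w_n\|_{X^1}$ (minimality of the harmonic extension) is in fact slightly more careful than the paper, which writes this step as an equality valid only when $w_n$ is itself a harmonic extension, but the argument is the same.
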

\begin{proof}
Let $0<\alpha M^2<\omega$. Then, setting $u_n(x)=w_n(x,0)$, by virtue of Proposition~\ref{moser}, we have
\begin{equation}
\label{chain-exp}
\int_{\R} \big(e^{\alpha u_n^2}-1\big) \,\ud x\leq \int_\R \big(e^{\alpha M^2 \big(\frac{u_n}{\|w_n\|}\big)^2}-1\big) \,\ud x 
\leq H_{\alpha M^2} \frac{\|u_n\|_{L^2}^2}{\|w_n\|^2}\leq H_{\alpha M^2},
\end{equation}
since $\|(-\Delta)^{1/4} u_n\|w_n\|^{-1}\|^2_{L^2}=\|(-\Delta)^{1/4} u_n\|^2_{L^2}/\|w_n\|^2
=\|w_n\|_{X^1}^2/\|w_n\|^2\leq 1$. Concerning the last assertion, there exists $\alpha_M>\omega$
with $\alpha_M M^2<\omega$ and the conclusion follows.
\end{proof}

\begin{lemma}
\label{exp-conv}
Let $\alpha>0$ and let $(w_n)\subset E$ be such that $w_n\to w$ strongly in $E$. Then
$$
\lim_n\int_\R \big(e^{\alpha w_n(x,0)^2}-1\big) \,\ud x=\int_\R  \big(e^{\alpha w(x,0)^2}-1 \big) \,\ud x.
$$
\end{lemma}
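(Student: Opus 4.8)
The plan is to prove the convergence via a standard dominated-convergence argument, using the stronger Trudinger--Moser estimate of Lemma~\ref{boundmoser} together with strong $L^2$-convergence of the traces to pass to a subsequence with an integrable dominating function. First I would set $u_n(x)=w_n(x,0)$ and $u(x)=w(x,0)$; since $w_n\to w$ strongly in $E$, the trace inequality (continuity of $w\mapsto w(\cdot,0)$ from $E$ to $L^2(\R)$, built into the norm of $E$) gives $u_n\to u$ in $L^2(\R)$, hence, up to a subsequence, $u_n\to u$ a.e.\ in $\R$ and there is $h\in L^2(\R)$ with $|u_n|\le h$ a.e.\ for all $n$. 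In particular $e^{\alpha u_n^2}-1\to e^{\alpha u^2}-1$ a.e.

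Next I would produce an integrable majorant for $e^{\alpha u_n^2}-1$. Here the difficulty is that $\alpha$ is an arbitrary positive number, while Lemma~\ref{boundmoser} only controls exponentials with exponent strictly below $\omega/M^2$, where $M=\sup_n\|w_n\|$; since the sequence is only strongly convergent, $\|w_n\|\to\|w\|$ and the $w_n$ need not have small norm, so I cannot directly apply the lemma to $\alpha u_n^2$. The remedy is a rescaling: fix $\beta>\alpha$ and choose $R>0$ so large that $\|w_n\|\le R$ for all $n$ and $\beta R^2<\ldots$ — but that still fails if $\beta R^2\ge\omega$. Instead I would exploit that the a.e.\ bound $|u_n|\le h$ already gives $e^{\alpha u_n^2}-1\le e^{\alpha h^2}-1$ pointwise; so it suffices to show $e^{\alpha h^2}-1\in L^1(\R)$. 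Since $h\in L^2(\R)$ need not make $e^{\alpha h^2}$ integrable, I instead argue more carefully: split $\R$ into the region where $h$ is large and where it is small. On $\{h\le 1\}$ one has $e^{\alpha h^2}-1\le (e^\alpha-1)h^2\in L^1$. On $\{h>1\}$, which has finite measure since $h\in L^2$, one would like $e^{\alpha h^2}\in L^1$; because $h$ is the pointwise supremum of an $L^2$-convergent sequence obtained from the diagonal/subsequence construction, one can in fact take $h$ so that $h^2=\sum_k (u_{n_k}-u)^2+u^2$ after passing to a sparse subsequence with $\|u_{n_{k+1}}-u\|_{L^2}\le 2^{-k}$; this does not automatically give an exponential bound either, so the clean route is the following.

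The clean route, and the one I would actually carry out, is to bound the exponential directly in $E$-norm rather than pointwise. Write $u_n = u + (u_n-u)$ with $\|u_n-u\|_{1/2}\to 0$, hence $\|(-\Delta)^{1/4}(u_n-u)\|_{L^2}\le 1$ for $n$ large; by the inequality $(a+b)^2\le (1+\delta)a^2+(1+\delta^{-1})b^2$ and H\"older's inequality with conjugate exponents, for any $\delta>0$,
\begin{equation*}
\int_\R\big(e^{\alpha u_n^2}-1\big)\,\ud x\le \Big(\int_\R\big(e^{p\alpha(1+\delta)u^2}-1\big)\,\ud x\Big)^{1/p}\Big(\int_\R\big(e^{p'\alpha(1+\delta^{-1})(u_n-u)^2}-1\big)\,\ud x\Big)^{1/p'}+\text{l.o.t.}
\end{equation*}
The second factor is bounded uniformly in $n$ once $p'\alpha(1+\delta^{-1})\|u_n-u\|_{1/2}^2<\omega$, which holds for $n$ large since $\|u_n-u\|_{1/2}\to 0$, by applying Proposition~\ref{moser} to $(u_n-u)/\|(-\Delta)^{1/4}(u_n-u)\|_{L^2}$; the first factor is a fixed finite number because $u\in H^{1/2}(\R)$ and Proposition~\ref{moser} (or Lemma~\ref{boundmoser} applied to the constant sequence) gives $e^{c u^2}-1\in L^1$ for every $c>0$. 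This yields $\sup_n\int_\R(e^{\alpha u_n^2}-1)\,\ud x<\infty$; combined with a.e.\ convergence and the generalized dominated convergence / Vitali convergence theorem (uniform integrability following from the above together with the a.e.\ bound $|u_n|\le h$ to handle tails via $\int_{\{h>\lambda\}}$), I conclude $\int_\R(e^{\alpha u_n^2}-1)\to\int_\R(e^{\alpha u^2}-1)$ along the subsequence; since the limit is independent of the subsequence, the full sequence converges. The main obstacle is precisely this uniform-integrability step: controlling $e^{\alpha u_n^2}$ for arbitrary fixed $\alpha$ when $\|w_n\|$ is not small, which is resolved by peeling off the small-norm perturbation $u_n-u$ and spending its vanishing norm against the arbitrarily large $\alpha$ inside the Trudinger--Moser inequality.
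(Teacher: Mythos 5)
Your central mechanism is the same as the paper's key point: peel off $w_n-w$, whose $E$-norm vanishes, and spend that smallness against the arbitrarily large $\alpha$ in the Trudinger--Moser inequality. (The paper implements this by a mean-value estimate of the difference $\big|(e^{\alpha w_n(x,0)^2}-1)-(e^{\alpha w(x,0)^2}-1)\big|$, followed by a four-fold H\"older inequality in which Lemma~\ref{boundmoser} is applied to $w_n-w$ and the surviving factor $\|w_n(\cdot,0)-w(\cdot,0)\|_{L^{r_4}}\to 0$ forces the whole difference to $0$ in $L^1(\R)$, so no convergence theorem is needed at the end.) Your write-up, however, has two genuine gaps in the concluding step. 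First, the finiteness of $\int_\R\big(e^{p\alpha(1+\delta)u^2}-1\big)\,\ud x$ is \emph{not} given by Proposition~\ref{moser} nor by Lemma~\ref{boundmoser} applied to the constant sequence: those results only control exponents below $\omega$ (for normalized seminorm), respectively below $\omega/\|w\|^2$, while $p\alpha(1+\delta)$ is arbitrarily large. The fact you need, namely $e^{c u^2}-1\in L^1(\R)$ for every $c>0$ and every fixed $u\in H^{1/2}(\R)$, is true but requires the splitting argument of \cite[Proposition 2.5]{Antonio} (approximate $u$ by a smooth compactly supported function and apply the same peeling trick to the small remainder); this is exactly what the paper cites at the corresponding point, and as justified in your proposal the step is unsupported.

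Second, and more seriously as a matter of logic, $\sup_n\int_\R(e^{\alpha u_n^2}-1)\,\ud x<\infty$ together with a.e.\ convergence does not imply convergence of the integrals (Fatou gives only one inequality), and the uniform integrability you invoke is not actually established: the majorant $|u_n|\le h$ with $h\in L^2(\R)$ gives no control of $e^{\alpha u_n^2}$ on the set where $h$ is large, which is precisely where equi-integrability could fail. The repair is within reach of your own estimate: run the same H\"older bound with some $\beta>\alpha$ (say $\beta=2\alpha$) in place of $\alpha$ to get $\sup_n\int_\R(e^{\beta u_n^2}-1)\,\ud x<\infty$ for $n$ large; this yields equi-integrability of $e^{\alpha u_n^2}-1$ on sets of finite measure (de la Vall\'ee-Poussin with $\Phi(s)=(1+s)^{\beta/\alpha}-1$, or H\"older with exponent $\beta/\alpha$), while on $\{h\le\lambda\}$ you have the integrable majorant $C(\lambda,\alpha)h^2$ and $|\{h>\lambda\}|<\infty$, so a Vitali-type argument closes on all of $\R$. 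Alternatively, and more economically, estimate the difference directly as the paper does, so that the vanishing factor $\|u_n-u\|_{L^{r_4}}$ does all the work and no uniform-integrability discussion is needed.
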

\begin{proof}
By applying Lagrange's theorem to the function $s\mapsto e^{\alpha s^2}$, we get
\begin{align*}
& \big| \big( e^{\alpha w_n(x,0)^2}-1\big)- \big(e^{\alpha w(x,0)^2}-1\big) \big| \\
& \leq 2\alpha(|w_n(x,0)-w(x,0)|+|w(x,0)|)e^{2\alpha|w_n(x,0)-w(x,0)|^2}e^{2\alpha|w(x,0)|^2}|w_n(x,0)-w(x,0)|.
\end{align*}
The right-hand side splits into several terms. We shall handle one of them, namely
$$
(|w_n(x,0)-w(x,0)|+|w(x,0)|)(e^{2\alpha|w_n(x,0)-w(x,0)|^2}-1)(e^{2\alpha|w(x,0)|^2}-1)|w_n(x,0)-w(x,0)|,
$$
since the other terms can be handled in a similar fashion. Then one applies H\"older inequality with four
terms with exponents $r_1,r_4\geq 2$ and $r_2,r_3>1$ such that $1/r_1+1/r_2+1/r_3+1/r_4=1$.
Recall that $(e^{x}-1)^r\leq (e^{rx}-1)$ holds for $r>1$ and $x\geq 0$. For the first term, 
$\|w_n-w\|_{L^{r_1}}+\|w\|_{L^{r_1}}\leq C$ by the continuous Sobolev embedding in any $L^r$ space with $r\geq 2$.
For the second term, since $\|w_n-w\|\to 0$, one can apply Lemma~\ref{boundmoser} (this is the key point of the proof) and deduce
$$
\int_\R \big(e^{2r_2\alpha |w_n(x,0)-w(x,0)|^2}-1\big) \,\ud x\leq C.
$$
For the third term we have 
$$
\int_\R \big(e^{2r_3\alpha |w(x,0)|^2}-1\big) \,\ud x<\infty.
$$
Here we used that $e^{u^2}-1\in L^1(\R)$ for $u\in H^{1/2}(\R)$, see the argument in \cite[Proposition 2.5]{Antonio}.
Finally the last term is estimated with $\|w_n-w\|_{L^{r_4}}$, which goes to zero and conclude the proof.
\end{proof}

\noindent
The following is a straightforward application of Fatou's lemma.

\begin{lemma}
\label{byFat}
Let $f_n,g_n,h_n:\R\to\R^+$ sequences of nonnegative measurable functions. Assume that $f_n$ converges pointwisely to $0$
and that $g_n,h_n$ converge pointwisely to $g,h:\R\to\R^+$. Assume also that, for every $\eps>0$,
there exists $C(\eps)>0$ such that
$$
f_n\leq \eps g_n+C(\eps) h_n,\,\,\,\, n\in\N,
\qquad \sup_{n\in\N}\int_\R g_n \,\ud x<\infty,\quad
\lim_n\int_\R h_n \,\ud x=\int_\R h \,\ud x.
$$
Then $f_n\to 0$ in $L^1(\R)$.
\end{lemma}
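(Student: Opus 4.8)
The plan is to invoke Fatou's lemma, but not on $f_n$ directly (that would only yield $\int_\R \liminf f_n \le \liminf \int_\R f_n$, which is vacuous since $f_n\to 0$); instead I would apply it to the \emph{nonnegative} sequence obtained by subtracting $f_n$ from its pointwise majorant. Fix $\eps>0$ and set
\[
\phi_n:=\eps g_n+C(\eps)h_n-f_n,
\]
which is $\ge 0$ by the assumed bound. Since $f_n\to 0$, $g_n\to g$, $h_n\to h$ pointwise, one has $\phi_n\to \eps g+C(\eps)h$ pointwise. A preliminary observation is that all the integrals below are finite: Fatou applied to $(g_n)$ gives $\int_\R g\,\ud x\le\liminf_n\int_\R g_n\,\ud x\le\sup_n\int_\R g_n\,\ud x<\infty$, and by hypothesis $\int_\R h\,\ud x=\lim_n\int_\R h_n\,\ud x<\infty$; hence no indeterminate form $\infty-\infty$ ever occurs.

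Applying Fatou to $(\phi_n)$ yields
\[
\eps\int_\R g\,\ud x+C(\eps)\int_\R h\,\ud x \;\le\; \liminf_n\int_\R\phi_n\,\ud x \;=\;\liminf_n\Big(\eps\!\int_\R g_n\,\ud x+C(\eps)\!\int_\R h_n\,\ud x-\int_\R f_n\,\ud x\Big).
\]
To exploit this I would pick a subsequence $(n_k)$ along which $\int_\R f_{n_k}\,\ud x\to\limsup_n\int_\R f_n\,\ud x=:L$, and, passing to a further subsequence, along which $\int_\R g_{n_k}\,\ud x\to\gamma$ for some $\gamma\in[0,\sup_n\int_\R g_n\,\ud x]$. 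Since $\int_\R h_{n_k}\,\ud x\to\int_\R h\,\ud x$, along this subsequence $\int_\R\phi_{n_k}\,\ud x\to\eps\gamma+C(\eps)\int_\R h\,\ud x-L$; as the full $\liminf$ is $\le$ any subsequential limit, the displayed inequality forces $\eps\int_\R g\,\ud x+C(\eps)\int_\R h\,\ud x\le\eps\gamma+C(\eps)\int_\R h\,\ud x-L$, i.e. $L\le\eps\gamma-\eps\int_\R g\,\ud x\le\eps\sup_n\int_\R g_n\,\ud x$.

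Since $\eps>0$ is arbitrary and $\sup_n\int_\R g_n\,\ud x<\infty$, letting $\eps\to 0$ gives $\limsup_n\int_\R f_n\,\ud x\le 0$, hence $\int_\R f_n\,\ud x\to 0$; as $f_n\ge 0$ this is exactly $f_n\to 0$ in $L^1(\R)$. I do not expect a serious obstacle here: the statement is in essence the generalized dominated convergence (Pratt) lemma, the $h_n$ playing the role of the variable dominating functions. The only point requiring care is the bookkeeping with $\liminf$/$\limsup$ of the sum, which is why I pass to subsequences, together with the a priori finiteness of $\int_\R g\,\ud x$ and $\int_\R h\,\ud x$ needed to cancel the $C(\eps)\int_\R h$ terms legitimately.
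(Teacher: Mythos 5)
Your proof is correct and is exactly the ``straightforward application of Fatou's lemma'' that the paper alludes to without spelling out: the paper states Lemma~\ref{byFat} with no proof, so you are filling in the intended argument. Applying Fatou to the nonnegative sequence $\phi_n=\eps g_n+C(\eps)h_n-f_n$, cancelling the finite quantity $C(\eps)\int_\R h\,\ud x$ (which is where the hypothesis $\lim_n\int_\R h_n\,\ud x=\int_\R h\,\ud x<\infty$ is genuinely used), dropping $\eps\int_\R g\,\ud x\ge 0$, and then sending $\eps\to0$ using $\sup_n\int_\R g_n\,\ud x<\infty$ is the standard Pratt-type argument, and your subsequence bookkeeping to turn the $\liminf$ of $\int_\R\phi_n\,\ud x$ into a statement about $\limsup_n\int_\R f_n\,\ud x$ is legitimate (one could also just use $\liminf(a_n+b_n)\le\limsup a_n+\liminf b_n$ directly, but this is a matter of taste). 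No gap.
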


\noindent
We can now state the following compactness result for the subcritical growth case.

\begin{proposition}[Compactness I -- subcritical case]
\label{convergeG1} 
Assume {\rm (g1)-(g3)}. Let  $(w_n)\subset E$ be a bounded sequence and $w_n \rightharpoonup w$ in $E$.
Then, up to a subsequence, the following facts hold:
\begin{align}
&\lim_n \int_{\mathbb{R}} K(x) G(w_n(x,0))  \,\ud x  =  \int_{\mathbb{R}} K(x) G(w(x,0))  \,\ud x ; \label{Veneto}\\
&\lim_n\int_{\mathbb{R}} K(x) w_n(x,0) g(w_n(x,0))   \,\ud x  =  \int_{\mathbb{R}} K(x) w(x,0) g(w(x,0))  \,\ud x ; \label{Toscana}\\
&\lim_n\int_{\mathbb{R}} K(x)  g(w_n(x,0))v(x,0)  \,\ud x  =  \int_{\mathbb{R}} K(x)  g(w(x,0)v(x,0))  \,\ud x , \,\,\,\,\text{for all $v \in E$} \label{Roma}.
\end{align}
\end{proposition}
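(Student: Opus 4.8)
My strategy is to prove the three limits by establishing, for each integrand, a pointwise-almost-everywhere convergence statement together with a domination/equi-integrability estimate that lets me pass to the limit, and then invoke Lemma~\ref{byFat} as the abstract bookkeeping device. Write $u_n(x) := w_n(x,0)$ and $u(x) := w(x,0)$. Since $(w_n)$ is bounded in $E$ and the trace map $E \to H^{1/2}(\R)$ is bounded, $(u_n)$ is bounded in $H^{1/2}(\R)$, hence (after passing to a subsequence) $u_n \to u$ in $L^q_{\mathrm{loc}}$, in $L^q_K(\R)$ for every $q \in (2,\infty)$ by Proposition~\ref{converge}, and pointwise a.e.\ on $\R$. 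The exponential subcritical growth (g2) is the crucial input: it says that for every $\eps>0$ and every exponent one wants, one can find $C(\eps)$ with $g(s) \le \eps |s| + C(\eps)(e^{\alpha s^2}-1)$ and correspondingly $G(s) \le \eps |s|^2 + C(\eps)(e^{\alpha s^2}-1)$ — and here $\alpha>0$ is at our disposal because the growth is subcritical. Actually it is cleaner to bound by a power: using (g1)--(g2) one gets, for any fixed $q>2$, constants with $G(s) \le \eps(|s|^2 + |s|^q) + C(\eps)(e^{\alpha s^2}-1)|s|^q$ or similar; the point is to produce a splitting of the form required by Lemma~\ref{byFat}.

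For \eqref{Veneto}: set $f_n := |K(x)G(u_n) - K(x)G(u)|$, which converges to $0$ pointwise a.e.\ by continuity of $G$. I need $f_n \le \eps g_n + C(\eps) h_n$ with $\sup_n \int g_n < \infty$ and $\int h_n \to \int h$. Use (g1)--(g2): given $\eps$, pick a small $\alpha>0$ (small enough that Lemma~\ref{boundmoser} applies to the bounded sequence $(u_n/\|w_n\|)$, i.e.\ $\alpha M^2 < \omega$) so that $G(s) \le \eps(|s|^2 + |s|^q) + C(\eps)(e^{\alpha s^2}-1)$; then $K(x)G(u_n) \le \eps K(x)(|u_n|^2+|u_n|^q) + C(\eps) K(x)(e^{\alpha u_n^2}-1)$, and similarly for $u$; taking the obvious majorant, $g_n := 2\|K\|_\infty(|u_n|^2 + |u_n|^q)$ — wait, $\int |u_n|^2$ is only bounded, $\int |u_n|^q$ is bounded (Sobolev), so $g_n$ has bounded integral — and $h_n := K(x)(e^{\alpha u_n^2}-1)$. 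For the $h_n$ piece I need $\int_\R K(x)(e^{\alpha u_n^2}-1)\,\ud x \to \int_\R K(x)(e^{\alpha u^2}-1)\,\ud x$; this follows by the same weighted-compactness mechanism as Proposition~\ref{converge}: outside a large ball one uses the Trudinger--Moser bound of Lemma~\ref{boundmoser} plus \eqref{decay-K} to make the tail uniformly small (the sets where $e^{\alpha u_n^2}-1$ is large have controlled measure via the $L^p$ bounds), and inside the ball one uses Lemma~\ref{exp-conv}-type local convergence from $L^q_{\mathrm{loc}}$ strong convergence together with uniform integrability coming from a slightly larger exponent $\alpha' > \alpha$. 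Then Lemma~\ref{byFat} gives $f_n \to 0$ in $L^1$, which is exactly \eqref{Veneto}.

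For \eqref{Toscana}: identical scheme with $G(s)$ replaced by $sg(s)$, noting that (g3) (monotonicity of $g(s)/s$) gives $sg(s) \ge 2G(s)$ but more importantly (g1)--(g2) give $sg(s) \le \eps(|s|^2+|s|^q) + C(\eps)(e^{\alpha s^2}-1)$ as well, so the same $g_n, h_n$ work and Lemma~\ref{byFat} applies verbatim. For \eqref{Roma}: here the test function $v \in E$ is fixed, so $v(x,0) \in H^{1/2}(\R)$, and I write $f_n := |K(x)g(u_n)v(x,0) - K(x)g(u)v(x,0)| = K(x)|g(u_n)-g(u)|\,|v(x,0)|$, pointwise $\to 0$; the estimate $g(s) \le \eps|s| + C(\eps)(e^{\alpha s^2}-1)$ combined with Hölder (splitting the $v(x,0)$ factor off with an $L^2$ or $L^r$ norm, which is finite and fixed) reduces everything to the already-controlled quantities $\int K |u_n|^p$ and $\int K(e^{\alpha u_n^2}-1)$; again Lemma~\ref{byFat} closes it.

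**Main obstacle.** The genuinely delicate point is the convergence $\int_\R K(x)(e^{\alpha u_n^2}-1)\,\ud x \to \int_\R K(x)(e^{\alpha u^2}-1)\,\ud x$ for $u_n \rightharpoonup u$ only \emph{weakly} in $E$ — note Lemma~\ref{exp-conv} is stated for \emph{strong} convergence, so it does not apply directly. One must redo the Proposition~\ref{converge} tail argument for the exponential integrand: split $\R = B_r(0) \cup B_r^c(0)$; on $B_r^c$, estimate $\int_{B_r^c} K(e^{\alpha u_n^2}-1) \le \big(\int_{B_r^c \cap \{|u_n|\le s_0\}} + \int_{\{|u_n|>s_0\}}\big)$, bounding the first part by $K$-measure times $e^{\alpha s_0^2}-1$ and controlling the second part's contribution to the tail via \eqref{decay-K} after observing $|\{|u_n|>s_0\}|$ is bounded (from the $L^2$ bound) and using Lemma~\ref{boundmoser} with a slightly larger exponent to get equi-integrability of $e^{\alpha u_n^2}-1$ over that set; on $B_r(0)$ use the compact embedding $H^{1/2}(B_r) \hookrightarrow L^q(B_r)$ plus Vitali (the higher-exponent Trudinger--Moser bound supplies uniform integrability). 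This is precisely the step where the restriction to subcritical growth and the hypothesis $\alpha M^2 < \omega$ are used in an essential way; the rest is routine bookkeeping through Lemma~\ref{byFat}.
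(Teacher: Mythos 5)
Your plan and the paper's proof both run through Lemma~\ref{byFat}, but the way you slot the terms into that lemma is reversed, and that reversal is where the argument breaks down. In the paper the subcritical growth is used precisely so that the exponential piece $e^{\alpha w_n(\cdot,0)^2}-1$ appears in the $g_n$ slot of Lemma~\ref{byFat} (where only a \emph{uniform bound} $\sup_n\int g_n<\infty$ is needed, supplied by Lemma~\ref{boundmoser}) with the \emph{small} prefactor $\eps$, while the $h_n$ slot (where one must prove $\int h_n\to\int h$) is filled with a pure power $|w_n(\cdot,0)|^q+|w(\cdot,0)|^q$. Moreover Lemma~\ref{byFat} is applied only on the bounded ball $B_{r(\eps)}(0)$, where $L^q$-convergence of the traces is exactly the local compact embedding; the tail $\int_{B^c_{r(\eps)}}K\,G(w_n(\cdot,0))\le(2C_0E+1)\eps$ is handled \emph{separately} via the cut-off $\chi_{[s_0(\eps),s_1(\eps)]}$, the bounded-measure of $A^n_\eps$ and assumption \eqref{decay-K}. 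You instead put the exponential term into $h_n$, i.e. you need the full-line convergence
$\int_\R K(x)\bigl(e^{\alpha w_n(x,0)^2}-1\bigr)\,\ud x\to\int_\R K(x)\bigl(e^{\alpha w(x,0)^2}-1\bigr)\,\ud x$
for a merely \emph{weakly} convergent sequence. That claim is not contained in (and not implied by) any of the lemmas in the paper, and is not a routine variant of Proposition~\ref{converge}: the function $s\mapsto e^{\alpha s^2}-1$ behaves like $\alpha s^2$ near $0$, so the decomposition used in Proposition~\ref{converge} (which crucially requires $q>2$ to make the near-zero coefficient an arbitrary $\eps$) does not apply, and in fact $\int_\R K |w_n(\cdot,0)|^2\,\ud x$ need not converge under weak convergence of $w_n$.

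Your own sketch of this step also contains a concrete error: you propose to bound
$\int_{B^c_r\cap\{|u_n|\le s_0\}}K\bigl(e^{\alpha u_n^2}-1\bigr)\,\ud x$
by ``$K$-measure of that set times $e^{\alpha s_0^2}-1$.'' The set $B^c_r\cap\{|u_n|\le s_0\}$ has infinite Lebesgue measure, $K$ is only assumed bounded (not integrable), and \eqref{decay-K} controls the $K$-mass only of \emph{bounded-measure} families of sets; so $\int_{B^c_r\cap\{|u_n|\le s_0\}}K\,\ud x$ may well be $+\infty$ and the proposed bound is void. This is precisely the obstruction the paper's $\eps(\cdot)$-weighting of $e^{\alpha s^2}-1$ is designed to avoid. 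Your idea for \eqref{Roma} (Hölder splitting off a fixed $v\in E$) is essentially sound, since there the fixed test function supplies the decay in $x$ and the argument does not hinge on a global convergence of the exponential integral; but for \eqref{Veneto}--\eqref{Toscana} you should follow the paper's structure: keep the exponential in the $g_n$ slot with a small $\eps$-prefactor, take $h_n=|w_n(\cdot,0)|^q+|w(\cdot,0)|^q$, apply Lemma~\ref{byFat} on $B_{r(\eps)}(0)$ only, and treat the tail with \eqref{2.12}--\eqref{2.13}.
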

\begin{proof}
Let us prove \eqref{Veneto} and \eqref{Toscana}. Let $\sup_{n\in\N}\|w_n\|=:M$.
Let us also fix $\eps>0$, $q>2$ and $0<\alpha<\omega/M^2$, according to Lemma~\ref{boundmoser}. In light of (g2), we learn that
$$
\limsup_{s\to +\infty} \frac{g(s)s}{e^{\alpha s^2}-1}=\limsup_{s\to +\infty} \frac{G(s)}{e^{\alpha s^2}-1}=0,\quad
\limsup_{s\to 0^+} \frac{g(s)s}{s^2}=\limsup_{s\to 0^+} \frac{G(s)}{s^2}=0.
$$
Then there exist $0<s_0(\eps)<s_1(\eps)$, $C(\eps)>0$  and $C_0$ depending only upon $K$, such that
\begin{align}\label{2.12}
 |K(x)G(s)|&\leq \epsilon C_0(s^2 +e^{\alpha s^2}-1)+ C(\eps) K(x)\chi_{[s_0(\eps),s_1(\eps)]}(|s|)|s|^q, \,\,\quad\text{for all $s\in \R$}, \\
\label{2.12.1}
  |K(x)g(s)s|&\leq \epsilon C_0(s^2 +e^{\alpha s^2}-1)+ C(\eps) K(x)\chi_{[s_0(\eps),s_1(\eps)]}(|s|)|s|^q, \,\,\quad\text{for all $s\in \R$}.
\end{align}
By virtue of Lemma~\ref{boundmoser} we find $E>0$ such that
\begin{equation}
\label{bound-exp}
\sup_{n\in\N}\int_{\R} (e^{\alpha w_n(x,0)^2}-1) \,\ud x\leq E,
\qquad  \int_\R |w_n(x,0)|^2 \,\ud x\leq E.
\end{equation}
Notice again that, by means of \eqref{decay-K}, there exists $r(\eps)>0$ such that
\begin{equation}\label{2.13}
 \int_{A^n_\eps \cap B^c_{r(\eps)}(0)} K(x) \,\ud x\leq \frac{\epsilon}{C(\eps)s_1(\eps)^q}, \quad\text{for all $n \in \N$}.
\end{equation}
Now, combining the above inequality with \eqref{2.12}-\eqref{2.12.1}, we have
\begin{align}\label{2.14}
& \int_{B^{c}_{r(\eps)}(0)} K(x)G(w_n(x,0)) \,\ud x\leq (2C_0E + 1)\epsilon ,\quad\text{for all $n \in \N$}, \\
\label{2.14.1}
& \int_{B^{c}_{r(\eps)}(0)} K(x)g(w_n(x,0))w_n(x,0) \,\ud x\leq (2C_0E + 1)\epsilon ,\quad\text{for all $n \in \N$}.
\end{align}
Notice that we have
\begin{align*}
|K(x)(G(w_n(x,0))-G(w(x,0))|&\leq \epsilon (w_n(x,0)^2 +e^{\alpha w_n(x,0)^2}-1 
+w(x,0)^2 +e^{\alpha w(x,0)^2}-1) \\
&+ C(\eps)(|w_n(x,0)|^q+|w(x,0)|^q).
\end{align*}
A similar estimation holds for $K(x)g(s)s$. 
Hence, by \eqref{bound-exp} and since $w_n(x,0)\to w(x,0)$ in $L^q(B_{r(\eps)(0)})$
by the compact embedding \cite[Theorem 7.1]{nezza}, Lemma~\ref{byFat} allows to conclude that
\begin{align*}
 & \lim_{n}\int_{B_{r(\eps)}(0)} K(x)
G(w_n(x,0)) \,\ud x=\int_{B_{r(\eps)}(0)} K(x)G(w(x,0)) \,\ud x, \\
& \lim_{n}\int_{B_{r(\eps)}(0)} K(x)
g(w_n(x,0))w_n(x,0) \,\ud x=\int_{B_{r(\eps)}(0)} K(x)g(w(x,0)w(x,0) \,\ud x.
\end{align*}
Combining these with \eqref{2.14}-\eqref{2.14.1} we conclude the proof.
\noindent
Let us now prove \eqref{Roma}. The sequence
$(\sqrt{K(x)} g(w_n(x,0))\chi_{\{|w_n(x,0)|\leq 1\}})$ is bounded in $L^2(\R)$ as by (g1)
$$
|\sqrt{K(x)} g(w_n(x,0))\chi_{\{|w_n(x,0)|\leq 1\}}|^2\leq C |w_n(x,0)|^2.
$$
This, by pointwise convergence, yields for every $\varphi\in L^2(\R)$
$$
\lim_k\int_{\R} \sqrt{K(x)} g(w_n(x,0))\chi_{\{|w_n(x,0)|\leq 1\}}\varphi(x) \,\ud x=\int_{\R} \sqrt{K(x)} g(w(x,0))
\chi_{\{|w(x,0)|\leq 1\}}\varphi(x) \,\ud x.
$$
Given $v\in E$, it follows $\sqrt{K(x)}v(x,0)\in L^2(\R)$, yielding 
\begin{equation*}
\lim_k\int_{\R} K(x) g(w_n(x,0))\chi_{\{|w_n(x,0)|\leq 1\}} v(x,0) \,\ud x=\int_{\R} K(x) g(w(x,0))\chi_{\{|w(x,0)|\leq 1\}} v(x,0) \,\ud x.
\end{equation*}
Moreover, by  $(g2)$,
$(K(x) g(w_n(x,0))\chi_{\{|w_n(x,0)|\geq 1\}})$ is bounded in $L^m(\R)$ by Lemma~\ref{boundmoser} as
$$
|K(x) g(w_n(x,0))\chi_{\{|w_n(x,0)|\geq 1\}}|^{m}\leq C(e^{m\alpha w_n(x,0)^2}-1),\quad \text{for $m\alpha<\omega/M^2$}.
$$
Here $m>1$ is taken close to $1$. Then, for all $v\in E\subset L^{m'}(\R)$ (notice that $m'>2$), we get
\begin{equation*}
\lim_k\int_{\R} K(x) g(w_n(x,0))\chi_{\{|w_n(x,0)|\geq 1\}} v(x,0) \,\ud x=\int_{\R} K(x) g(w(x,0))\chi_{\{|w(x,0)|\geq 1\}} v(x,0) \,\ud x.
\end{equation*}
This concludes the proof of \eqref{Roma}.
\end{proof}

\noindent
From now on, in assumption (g2)$'$, we can assume that $\alpha_0=\omega$, without loss of generality.
We can state the following for the critical growth case.

\begin{proposition}[Compactness II -- critical case]
\label{convergeG2} 
Assume {\rm (g1)-(g2)$'$-(g3)$'$}. Let  $(w_n)\subset E$ a bounded sequence and $w_n \rightharpoonup w$ in $E$ such that
$$
\sup_{n\in\N}\|w_n\|\in (0,1).
$$
Then, up to a subsequence, the following facts hold:
\begin{align}
&\lim_n \int_{\mathbb{R}} K(x) G(w_n(x,0))  \,\ud x  =  \int_{\mathbb{R}} K(x) G(w(x,0))  \,\ud x ; \label{Veneto1}\\
&\lim_n\int_{\mathbb{R}} K(x) w_n(x,0) g(w_n(x,0))   \,\ud x  =  \int_{\mathbb{R}} K(x) w(x,0) g(w(x,0))  \,\ud x ; \label{Toscana1}\\
&\lim_n\int_{\mathbb{R}} K(x)  g(w_n(x,0))v(x,0)  \,\ud x  =  \int_{\mathbb{R}} K(x)  g(w(x,0)v(x,0))  \,\ud x , \,\,\,\,\text{for all $v \in E$} \label{Roma1}.
\end{align}
\end{proposition}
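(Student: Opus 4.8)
The plan is to follow the scheme of the proof of Proposition~\ref{convergeG1} almost verbatim; the only new ingredient is that the hypothesis $M:=\sup_{n\in\N}\|w_n\|\in(0,1)$ must be used to gain exponential integrability strictly above the threshold $\omega=\alpha_0$. Concretely, since $M<1$ we have $\omega/M^2>\omega=\alpha_0$, so the interval $(\alpha_0,\omega/M^2)$ is nonempty and we fix $\alpha\in(\alpha_0,\omega/M^2)$ once and for all. On one hand $\alpha>\alpha_0$, so by the first line of (g2)$'$ (using an auxiliary exponent in $(\alpha_0,\alpha)$, exactly as in the proof of Proposition~\ref{convergeG1}) one gets $\limsup_{s\to+\infty}g(s)/(e^{\alpha s^2}-1)=0$ and, likewise, $G(s)/(e^{\alpha s^2}-1)\to0$ and $g(s)s/(e^{\alpha s^2}-1)\to0$ as $s\to+\infty$. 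On the other hand $\alpha M^2<\omega$, so Lemma~\ref{boundmoser} provides $E>0$ with
$$\sup_{n\in\N}\int_\R\big(e^{\alpha w_n(x,0)^2}-1\big)\,\ud x\leq E,\qquad \sup_{n\in\N}\int_\R|w_n(x,0)|^2\,\ud x\leq E.$$
This single choice of $\alpha$ is what lets the subcritical proof go through here.

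\emph{Proof of \eqref{Veneto1}--\eqref{Toscana1}.} Combining (g1) near $0$, (g2)$'$ with the above $\alpha$ near $+\infty$, the boundedness of $g$ on compact subsets of $\R^+$ and $K\in L^\infty(\R)$, for every $\eps>0$ there exist $0<s_0(\eps)<s_1(\eps)$, $C(\eps)>0$ and $C_0=C_0(K)>0$ such that, exactly as in \eqref{2.12}--\eqref{2.12.1},
$$|K(x)G(s)|+|K(x)g(s)s|\leq\eps C_0\big(s^2+e^{\alpha s^2}-1\big)+C(\eps)K(x)\chi_{[s_0(\eps),s_1(\eps)]}(|s|)|s|^q,\qquad x,s\in\R.$$
From this point the argument is identical to that of Proposition~\ref{convergeG1}: the uniform $L^q(\R)$ bound on $(w_n(\cdot,0))$ bounds $\sup_n|A^n_\eps|$, then \eqref{decay-K} yields $r(\eps)>0$ with $\int_{A^n_\eps\cap B^c_{r(\eps)}(0)}K\,\ud x\leq\eps/(C(\eps)s_1(\eps)^q)$ for all $n$, hence the integrals of $K\,G(w_n)$ and $K\,g(w_n)w_n$ over $B^c_{r(\eps)}(0)$ are $\leq(2C_0E+1)\eps$ uniformly in $n$ (and the same bound holds with $w$ in place of $w_n$); on the ball $B_{r(\eps)}(0)$ one invokes the compact embedding \cite[Theorem 7.1]{nezza} to get $w_n(\cdot,0)\to w(\cdot,0)$ in $L^q(B_{r(\eps)}(0))$ and a.e., and then the domination above, the uniform bounds from the previous paragraph and Lemma~\ref{byFat} allow to pass to the limit on $B_{r(\eps)}(0)$. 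Letting $\eps\to0$ gives \eqref{Veneto1} and \eqref{Toscana1}.

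\emph{Proof of \eqref{Roma1}, and the main obstacle.} Write $g(w_n(x,0))=g(w_n(x,0))\chi_{\{|w_n(x,0)|\leq1\}}+g(w_n(x,0))\chi_{\{|w_n(x,0)|\geq1\}}$. For the first term, (g1) together with the continuity of $g$ on $[0,1]$ gives $|\sqrt{K(x)}\,g(w_n(x,0))\chi_{\{|w_n(x,0)|\leq1\}}|^2\leq C|w_n(x,0)|^2$, so this sequence is bounded in $L^2(\R)$ and, converging pointwise, converges weakly in $L^2(\R)$; pairing with $\sqrt{K(\cdot)}\,v(\cdot,0)\in L^2(\R)$ handles it. For the second term, since $\alpha>\alpha_0$, (g2)$'$ gives $|K(x)g(w_n(x,0))\chi_{\{|w_n(x,0)|\geq1\}}|^m\leq C\big(e^{m\alpha w_n(x,0)^2}-1\big)$; choosing $m>1$ so close to $1$ that $m\alpha M^2<\omega$ (possible since $\alpha M^2<\omega$), Lemma~\ref{boundmoser} makes this sequence bounded in $L^m(\R)$, hence weakly convergent in $L^m(\R)$, and one pairs with $v(\cdot,0)\in L^{m'}(\R)$ (note $m'>2$ and $E\hookrightarrow L^{m'}(\R)$). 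Summing the two limits yields \eqref{Roma1}. Thus the whole proof reduces to the first paragraph: the delicate point is that a \emph{single} exponent $\alpha$ must simultaneously dominate $g$ in the critical regime (which forces $\alpha>\alpha_0$) and yield a uniform Trudinger--Moser bound for $(w_n)$ via Lemma~\ref{boundmoser} (which forces $\alpha M^2<\omega$), and this is possible exactly because $\sup_n\|w_n\|<1$ makes $(\alpha_0,\omega/M^2)$ nonempty. After that, everything is the bookkeeping already carried out in the subcritical case.
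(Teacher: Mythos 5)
Your proposal is correct and follows essentially the same route as the paper: the single exponent $\alpha\in(\alpha_0,\omega/M^2)$ you fix is exactly the paper's $\alpha_M$ from the second assertion of Lemma~\ref{boundmoser}, and the rest of the argument (growth estimates, tail control via \eqref{decay-K}, compact embedding on balls plus Lemma~\ref{byFat} for \eqref{Veneto1}--\eqref{Toscana1}, and the $L^2$/$L^m$ splitting for \eqref{Roma1} with $m$ close to $1$ so that $m\alpha M^2<\omega$) is a direct transcription of the subcritical proof. You also correctly identify the interval $(\alpha_0,\omega/M^2)$ being nonempty as the sole place where $\sup_n\|w_n\|<1$ enters.
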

\begin{proof}
Let us prove \eqref{Veneto1} and \eqref{Toscana1}. Let $\sup_{n\in\N}\|w_n\|=:M\in (0,1)$.
By virtue of Lemma~\ref{boundmoser} there are $\alpha_M>\omega$ and $E>0$ with
\begin{equation}
\label{bound-crit}
\sup_{n\in\N}\int_{\R} (e^{\alpha_M w_n(x,0)^2}-1) \,\ud x\leq E,
\qquad  \int_\R |w_n(x,0)|^2 \,\ud x\leq E
\end{equation}
Let us fix $\eps>0$ and $q>2$. By virtue of  (g1) and (g2)$'$ we know that
$$
\limsup_{s\to +\infty} \frac{g(s)s}{e^{\alpha_M s^2}-1}=\limsup_{s\to +\infty} \frac{G(s)}{e^{\alpha_M s^2}-1}=0,
\qquad
\limsup_{s\to 0^+} \frac{g(s)s}{s^2}=\limsup_{s\to 0^+} \frac{G(s)}{s^2}=0.
$$
Then there exist $0<s_0(\eps)<s_1(\eps)$, $C(\eps)>0$  and $C_0$ depending only upon $K$, with
\begin{align}\label{2.12b}
 |K(x)G(s)|&\leq \epsilon C_0(|s|^2 +e^{\alpha_M s^2}-1)+ C(\eps) K(x)\chi_{[s_0(\eps),s_1(\eps)]}(|s|)|s|^q, \,\,\quad\text{for all $s\in \R$}, \\
\label{2.12.1b}
  |K(x)g(s)s|&\leq \epsilon C_0(|s|^2 +e^{\alpha_M s^2}-1)+ C(\eps) K(x)\chi_{[s_0(\eps),s_1(\eps)]}(|s|)|s|^q, \,\,\quad\text{for all $s\in \R$}.
\end{align}
Notice again that, by means of \eqref{decay-K}, there exists $r(\eps)>0$ such that
\begin{equation*}
 \int_{A^n_\eps \cap B^c_{r(\eps)}(0)} K(x) \,\ud x\leq \frac{\epsilon}{C(\eps)s_1(\eps)^q}, \quad\text{for all $n \in \N$}.
\end{equation*}
Now, combining the above inequality with \eqref{bound-crit} and  \eqref{2.12b}-\eqref{2.12.1b}, we have
\begin{align*}
& \int_{B^{c}_{r(\eps)}(0)} K(x)G(w_n(x,0)) \,\ud x\leq (2C_0E + 1)\epsilon ,\quad\text{for all $n \in \N$}, \\
& \int_{B^{c}_{r(\eps)}(0)} K(x)g(w_n(x,0))w_n(x,0) \,\ud x\leq (2C_0E + 1)\epsilon ,\quad\text{for all $n \in \N$}.
\end{align*}
The rest of the proof for \eqref{Veneto1} and \eqref{Toscana1} follows an in Proposition~\ref{convergeG1}, with $\alpha$
replaced by $\alpha_M$. 
Concerning \eqref{Roma1}, since $\alpha_M M^2<\omega$ there exists $m>1$ very close to $1$ such that $m\alpha_M M^2<\omega$.
Then $(K(x) g(w_n(x,0))\chi_{\{|w_n(x,0)|\geq 1\}})$ is bounded in $L^m(\R)$ by Lemma~\ref{boundmoser} since
$$
|K(x) g(w_n(x,0))\chi_{\{|w_n(x,0)|\geq 1\}}|^{m}\leq C(e^{m\alpha_M w_n(x,0)^2}-1).
$$
Then, for all $v\in E\subset L^{m'}(\R)$ (as $m'>2$), we get
\begin{equation*}
\lim_k\int_{\R} K(x) g(w_n(x,0))\chi_{\{|w_n(x,0)|\geq 1\}} v(x,0) \,\ud x=\int_{\R} K(x) g(w(x,0))\chi_{\{|w(x,0)|\geq 1\}} v(x,0) \,\ud x.
\end{equation*}
This concludes the proof of \eqref{Roma1}.
\end{proof}

\begin{proposition}
\label{C-uno}
$J\in C^1(E,\R)$. 
\end{proposition}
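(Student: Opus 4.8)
The plan is to split $J=\tfrac12\|\cdot\|^{2}-\Phi$, where $\Phi(w):=\int_{\R}K(x)G(w(x,0))\,\ud x$. The quadratic term $w\mapsto\tfrac12\|w\|^{2}$ is of class $C^{\infty}$ on the Hilbert space $E$, with differential $v\mapsto\int_{\R^{2}_{+}}\nabla w\cdot\nabla v\,\ud x\,\ud y+\int_{\R}w(x,0)v(x,0)\,\ud x$, so the whole statement reduces to showing $\Phi\in C^{1}(E,\R)$ with $\Phi'(w)(v)=\int_{\R}K(x)g(w(x,0))v(x,0)\,\ud x$; this also yields formula \eqref{derivative}. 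To see that $\Phi$ and $\Phi'(w)$ are well defined, fix $w\in E$ and set $u=w(\cdot,0)\in H^{1/2}(\R)$. By (g1)--(g2) (resp.\ (g1)--(g2)$'$), for every $\eps>0$ there are $\alpha>0$ (resp.\ $\alpha>\alpha_{0}$) and $C(\eps)>0$ with $0\le G(s)\le \eps s^{2}+C(\eps)(e^{\alpha s^{2}}-1)$ and $0\le g(s)s\le\eps s^{2}+C(\eps)(e^{\alpha s^{2}}-1)$ for $s\ge0$; since $K\in L^{\infty}(\R)$, $u\in L^{2}(\R)$, and $e^{cu^{2}}-1\in L^{1}(\R)$ for every $c>0$ whenever $u\in H^{1/2}(\R)$ (the fact recalled in the proof of Lemma~\ref{exp-conv}, cf.\ \cite[Proposition~2.5]{Antonio}), the integral defining $\Phi(w)$ is finite, and using the growth bound on $g$, H\"older's inequality, and the continuous trace embedding $\|v(\cdot,0)\|_{L^{r}}\le C\|v\|$ for $r\ge2$, one checks that $v\mapsto\int_{\R}K(x)g(u)v(x,0)\,\ud x$ belongs to $E^{*}$.

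Next, Gateaux differentiability at an arbitrary $w\in E$. Given $v\in E$ and $0<|t|\le1$, Lagrange's theorem gives, for a.e.\ $x$, a number $\theta_{t}(x)\in[0,1]$ with
\[
\frac{G(w(x,0)+tv(x,0))-G(w(x,0))}{t}=g\big(w(x,0)+\theta_{t}(x)\,tv(x,0)\big)\,v(x,0),
\]
which converges pointwise, as $t\to0$, to $g(w(x,0))v(x,0)$ by continuity of $g$. Because $g$ is nondecreasing on $\R^{+}$ and vanishes on $\R^{-}$ (by (g1) and (g3)/(g3)$'$), the left-hand side is dominated in absolute value by $g(|w(x,0)|+|v(x,0)|)\,|v(x,0)|$; multiplying by $K\in L^{\infty}(\R)$, using the growth bound on $g$, the elementary identity $e^{A+B}-1=(e^{A}-1)(e^{B}-1)+(e^{A}-1)+(e^{B}-1)$, and H\"older's inequality with several exponents — exactly the manipulation carried out in the proof of Lemma~\ref{exp-conv}, noting that powers $\ge1$ of $e^{c(\cdot)^{2}}-1$ are again of the same type and hence lie in $L^{1}(\R)$ for the fixed functions $w(\cdot,0),v(\cdot,0)$ — one produces an $L^{1}(\R)$ dominating function. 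Dominated convergence then gives $\frac{d}{dt}\big|_{t=0}\Phi(w+tv)=\int_{\R}K(x)g(w(x,0))v(x,0)\,\ud x$, as claimed.

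It remains to prove that $\Phi'\colon E\to E^{*}$ is continuous. Let $w_{n}\to w$ in $E$ and put $u_{n}=w_{n}(\cdot,0)$, $u=w(\cdot,0)$. Since $[u_{n}-u]_{1/2}\le C\|w_{n}-w\|_{X^{1}}\le C\|w_{n}-w\|$ and the $L^{2}$ part of the $E$-norm dominates $\|u_{n}-u\|_{L^{2}}$, we have $u_{n}\to u$ in $H^{1/2}(\R)$, hence in $L^{r}(\R)$ for every $r\ge2$ and, along a subsequence, a.e.\ in $\R$. From the formula for $\Phi'$, H\"older's inequality and the trace embedding, $\|\Phi'(w_{n})-\Phi'(w)\|_{E^{*}}\le C\|K\|_{\infty}\,\|g(u_{n})-g(u)\|_{L^{p}(\R)}$ for any fixed $p\in(1,2]$, so it suffices to show $g(u_{n})\to g(u)$ in $L^{p}(\R)$. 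I would establish this by a Vitali convergence argument: $g(u_{n})\to g(u)$ a.e.; equi-integrability near the origin follows from (g1) (so that $|g(u_{n})|^{p}\le o(1)|u_{n}|^{p}$ on $\{|u_{n}|\le\delta\}$); and on $\{|u_{n}|>\delta\}$ one uses $|g(u_{n})|^{p}\le C|u_{n}|^{p}+C(e^{p\alpha u_{n}^{2}}-1)$, the polynomial part being controlled by the convergence of $u_{n}$ in $L^{2p}(\R)$ together with $|\{|u_{n}|>\delta\}|\le E\delta^{-2}$, while for the exponential part one writes $u_{n}=u+\rho_{n}$ with $\|\rho_{n}\|_{H^{1/2}}\to0$, so that $u_{n}^{2}\le(1+\eta)u^{2}+C_{\eta}\rho_{n}^{2}$ and, by the identity above, $e^{p\alpha u_{n}^{2}}-1\le(e^{p\alpha(1+\eta)u^{2}}-1)(e^{p\alpha C_{\eta}\rho_{n}^{2}}-1)+(e^{p\alpha C_{\eta}\rho_{n}^{2}}-1)+(e^{p\alpha(1+\eta)u^{2}}-1)$. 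The two terms containing $\rho_{n}$ are uniformly small, for $n$ large, by Proposition~\ref{moser}/Lemma~\ref{boundmoser} applied to the \emph{vanishing} sequence $\rho_{n}$, and the last term is a fixed $L^{1}(\R)$ function, whose integral over sets of small measure and over $B_{R}^{c}$ is small uniformly in $n$. This is precisely the mechanism already used in Lemma~\ref{exp-conv} and Propositions~\ref{convergeG1}--\ref{convergeG2}, so in the write-up the argument can be phrased as a reduction to those estimates.

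The main obstacle is this last step in the \emph{critical} regime: when $\sup_{n}\|w_{n}\|\ge1$ one cannot invoke Lemma~\ref{boundmoser} directly for $(e^{p\alpha u_{n}^{2}}-1)$, since the growth bound forces $\alpha>\alpha_{0}=\omega$ and then $p\alpha\sup_{n}\|w_{n}\|^{2}>\omega$. The point — and the reason strong (not merely weak) convergence is what the continuity of $\Phi'$ needs — is exactly the splitting $u_{n}=u+\rho_{n}$: it isolates a perturbation $\rho_{n}$ of vanishing norm, to which Ozawa's inequality \eqref{I1} applies with any prescribed exponent, leaving only the fixed function $u$, for which every $e^{cu^{2}}-1$ already lies in $L^{1}(\R)$.
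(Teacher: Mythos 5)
Your proposal is correct and, for the substantive part (continuity of $\Phi'$), follows the same mechanism the paper uses: reduce to convergence of $g(w_n(\cdot,0))$ in some $L^p$, bound by $|s|^p$ plus an exponential term, and control the exponential by writing $w_n=w+(w_n-w)$ and applying Ozawa's inequality (Lemma~\ref{boundmoser}) to the strongly vanishing remainder — this is exactly what the paper accomplishes by invoking Lemma~\ref{exp-conv} together with the generalized dominated convergence theorem with $p=2$. Your write-up is more complete in that it also spells out well-definedness and Gateaux differentiability (which the paper leaves implicit) and explicitly flags why the splitting $u_n=u+\rho_n$ is indispensable in the critical regime, but these are elaborations of, not departures from, the paper's argument.
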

\begin{proof}
Let $(w_n)\subset E$ with $w_n\to w$ strongly in $E$. There exist $C>0$ and $\alpha>0$ such that 
$$
 |g(s)|^2\leq C(|s|^2 +e^{\alpha s^2}-1), \,\,\quad\text{for all $s\in \R$}.
$$
This choice fits both the subcritical and critical case. Hence
$$
|g(w_n(x,0))-g(w(x,0))|^2\leq C(|w_n(x,0)|^2 +e^{\alpha w_n(x,0)^2}-1)+C(|w(x,0)|^2 +e^{\alpha w(x,0)^2}-1).
$$
Taking into account Lemma~\ref{exp-conv}, we have
$$
\lim_n \int_{\R} (|w_n(x,0)|^2 +e^{\alpha w_n(x,0)^2}-1)\,\ud x=\int_{\R} (|w(x,0)|^2 +e^{\alpha w(x,0)^2}-1)\,\ud x.
$$
Then, by the Generalized Dominated Convergence Theorem, $\|g(w_n)-g(w)\|_{L^2}\to 0$. In turn,
\begin{align*}
\sup_{\|v\|\leq 1}\Big|\int_{\R} K(x)(g(w_n(x,0))-g(w(x,0)))v \,\ud x\Big|\leq C\|g(w_n)-g(w)\|_{L^2}\sup_{\|v\|\leq 1}\|v\|_{L^{2}(\R)}
\leq Co_n(1),
\end{align*}
which concludes the proof.
\end{proof}

\noindent
Next, we show that $J$ satisfies the Mountain Pass geometry.

\begin{lemma}
\label{MPG} The functional $J$ satisfies
\begin{enumerate}
\item There exists $\beta, \rho>0$ such that $J(w)\geq \beta$ if $w\in E$ and $\|w\|=\rho$;
\item There exists $e\in E\backslash\{0\}$ with $\|e\|>\rho$ such that $J(e)\leq 0$;
\end{enumerate}
\end{lemma}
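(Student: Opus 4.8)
The plan is to verify the two geometric conditions separately. The local condition (1) will follow from the behaviour of $g$ near the origin combined with the Trudinger--Moser inequality \eqref{I1}, while condition (2) will follow from the super-quadratic behaviour of $G$ at infinity. Throughout I write $u=w(\cdot,0)$ and use that, by the trace identities recalled in the introduction, $\|u\|_{L^2}\le\|w\|$ and $[u]_{1/2}\le C\|w\|$, together with the continuous embeddings $H^{1/2}(\R)\hookrightarrow L^r(\R)$ for $r\ge2$.

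\emph{Item (1).} First I would record a pointwise bound on $G$: given $q>2$ and $\eps>0$, assumptions (g1)--(g2) (or (g1)--(g2)$'$, in which case one fixes an admissible exponent $\alpha>\omega$ close to $\omega$) produce $C_\eps>0$ with
\begin{equation*}
0\le G(s)\le\eps|s|^2+C_\eps|s|^q\big(e^{\alpha s^2}-1\big),\qquad s\in\R,
\end{equation*}
since (g1) bounds $G$ by $\eps s^2$ for small $s\ge0$, while (g2) (resp.\ (g2)$'$) bounds $g$, hence $G$, by a multiple of $e^{\alpha s^2}-1$ on the remaining range, the extra factor $|s|^q$ being harmless away from $0$. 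Multiplying this inequality by $K$, integrating, and splitting the second term by H\"older's inequality with exponents $p>1$, $p'=p/(p-1)$ and the elementary bound $(e^x-1)^p\le e^{px}-1$, I obtain
\begin{equation*}
\int_\R K(x)G(u)\,\ud x\le\|K\|_\infty\Big(\eps\|u\|_{L^2}^2+C_\eps\|u\|_{L^{qp'}}^{q}\Big(\int_\R\big(e^{p\alpha u^2}-1\big)\,\ud x\Big)^{1/p}\Big).
\end{equation*}
If $\|w\|=\rho$ is small enough that $p\alpha(C\rho)^2<\omega$, then Proposition~\ref{moser} applied to $u/[u]_{1/2}$ bounds the exponential integral by a constant, while $\|u\|_{L^{qp'}}\le C\rho$ and $\|u\|_{L^2}\le\rho$; hence $\int_\R K(x)G(u)\,\ud x\le C_1\eps\rho^2+C_2\rho^q$ and
\begin{equation*}
J(w)\ge\big(\tfrac12-C_1\eps\big)\rho^2-C_2\rho^q.
\end{equation*}
Choosing first $\eps$ with $C_1\eps\le\tfrac14$ and then $\rho>0$ small (using $q>2$) gives $J(w)\ge\tfrac18\rho^2=:\beta>0$ on the sphere $\|w\|=\rho$.

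\emph{Item (2).} I would fix a nonnegative $w_0\in E\setminus\{0\}$ with $w_0(\cdot,0)\not\equiv0$ --- for instance the harmonic extension of a nonnegative bump function --- set $I:=\{x\in\R:\ w_0(x,0)>\tfrac12\}$, and note that $c_0:=\int_I K(x)\,\ud x>0$ since $K>0$ and $|I|>0$. Because $g\ge0$, the function $G$ is non-decreasing on $\R^+$, so for $t>0$ one has $J(tw_0)\le\tfrac{t^2}{2}\|w_0\|^2-c_0\,G(t/2)$. In the subcritical setting, (g3) supplies $s_j\to+\infty$ with $G(s_j)/s_j^2\to+\infty$, and taking $t=2s_j$ yields $J(2s_jw_0)\le 2s_j^2\big(\|w_0\|^2-\tfrac{c_0}{2}G(s_j)/s_j^2\big)\to-\infty$. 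In the critical setting, (g3)$'$ gives directly $\int_\R K(x)G(tw_0(x,0))\,\ud x\ge C_q t^q\int_\R K(x)|w_0(x,0)|^q\,\ud x$, so $J(tw_0)\le\tfrac{t^2}{2}\|w_0\|^2-C_3 t^q\to-\infty$ as $t\to+\infty$, since $q>2$. In either case I then pick $t$ so large that $J(tw_0)\le0$ and $t\|w_0\|>\rho$, and set $e:=tw_0$.

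The main obstacle is the uniform estimate in item (1): the H\"older exponent $p$, the growth exponent $\alpha$ (which in the critical case is constrained to exceed $\omega$), and the radius $\rho$ must be tuned simultaneously so that $p\alpha(C\rho)^2<\omega$ and Proposition~\ref{moser} becomes applicable. This is exactly where the bound $\omega\le\pi$ and the freedom to shrink $\rho$ enter; the remaining steps are routine applications of the continuous Sobolev embeddings and of the monotonicity $g\ge0$.
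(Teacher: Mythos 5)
Your approach is essentially the paper's: decompose $G$ into a small quadratic part plus a tail dominated by the exponential, apply H\"older together with the Trudinger--Moser inequality on a small sphere, tune the exponents so the Moser threshold $\omega$ is respected, and deduce item (2) from superquadraticity (which the paper dismisses as straightforward). Your fleshing out of item (2) in the two cases is correct.

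There is, however, one genuine slip in item (1). You propose to control $\int_\R\big(e^{p\alpha u^2}-1\big)\,\ud x$ by applying Proposition~\ref{moser} to $u/[u]_{1/2}$. That normalization does make the constraint $\|(-\Delta)^{1/4}(\cdot)\|_{L^2}^2\le 1$ hold trivially, but the resulting bound from~\eqref{I1} is $H_{\alpha'}\,\|u\|_{L^2}^2/[u]_{1/2}^2$, and this ratio is \emph{not} uniformly bounded on the sphere $\|w\|=\rho$: the seminorm $[u]_{1/2}=\|w\|_{X^1}$ can be arbitrarily small while $\|u\|_{L^2}$ is comparable to $\rho$. Your subsequent claim that the exponential integral is ``bounded by a constant'' therefore does not follow. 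The correct choice --- the one made in the paper's Lemma~\ref{boundmoser} and used via~\eqref{chain-exp} in its proof of Lemma~\ref{MPG} --- is to normalize by the full $E$-norm, i.e.\ apply the Moser inequality to $u/\|w\|$. This controls both quantities at once, since $[u]_{1/2}\le\|w\|$ gives the Dirichlet constraint and $\|u\|_{L^2}\le\|w\|$ makes the right-hand side of~\eqref{I1} at most $H_{p\alpha\rho^2}$, a genuine constant once $p\alpha\rho^2<\omega$. With that one-line replacement of $[u]_{1/2}$ by $\|w\|$, your argument coincides with the paper's.
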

\begin{proof}
Assertion (2) is straightforward due to the superquadraticity assumptions. For (1), let us consider
$w\in E$ with $\|w\|=\rho<1$ and $\omega<\alpha<\omega/\rho^2$. By the growth conditions
on $g$ (both critical and subcritical), there exist $r>1$ so close to $1$ that $r\alpha<\omega/\rho^2$, $q>2$ and $C>0$ with
$$
G(s)\leq \frac{1}{4}s^2+C(e^{r\alpha s^2}-1)^{1/r}s^q,\quad \text{for all $s\in\R^+$}.
$$
Then, taking into account inequality \eqref{chain-exp}, we have
\begin{align*}
J(w) &\geq \frac{1}{2}\|w\|^2-\frac{1}{4}\|w(\cdot,0)\|_{L^2}^2-C\int_{\R}\big(e^{r\alpha |w(x,0)|^2}-1\big)^{1/r}|w(x,0)|^q \,\ud x \\
& \geq \frac{1}{4}\|w\|^2-C\Big(\int_{\R}\big(e^{r\alpha |w(x,0)|^2}-1\big)\,\ud x\Big)^{1/r}  \Big(\int_\R |w(x,0)|^{r'q} \,\ud x)^{1/r'} \\
& \geq \frac{1}{4}\|w\|^2-C\|w\|^q=\frac{1}{4}\rho^2-C\rho^q=\beta>0,
\end{align*}
for every $\rho>0$ sufficiently small.
\end{proof}

\noindent
Therefore, there exists a sequence $ (w_n) \subset E$, so called {\it Cerami sequence} such that
\begin{equation}
\label{Cerami}
J(w_n)\to c, \quad (1 +\|w_n\|) \|J'(w_n)\|\to 0,
\end{equation}
where $c$  is given by
$$
c=\inf_{\gamma \in \Gamma}\max_{t\in[0,1]}J(\gamma(t)),
$$
with 
$$
\Gamma=\big\{\gamma \in C([0,1],E): \gamma(0)=0\ \mbox{and}\ J(\gamma(1))\leq 0\big\}.
$$

\noindent
We have the following result
\begin{lemma}
\label{cerami} 
The Cerami  sequence $(w_n) \subset E$ is bounded and $\|w_n^-\|\to 0$ as $n\to\infty$.
\end{lemma}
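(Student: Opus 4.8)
The plan is to establish first that $\|w_n^-\|\to 0$ and then that $(w_n)$ is bounded; the boundedness needs a different argument in the critical and in the subcritical regime.

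\emph{Negative part.} First I would test \eqref{derivative} against $v=w_n^-:=\max\{-w_n,0\}\in E$. Since $\nabla w_n\cdot\nabla w_n^-=-|\nabla w_n^-|^2$ and $w_n(x,0)w_n^-(x,0)=-w_n^-(x,0)^2$ a.e., while $g(w_n(x,0))w_n^-(x,0)\equiv 0$ because $g\equiv 0$ on $\R^-$, one obtains $J'(w_n)(w_n^-)=-\|w_n^-\|^2$, hence
\[
\|w_n^-\|^2\le \|J'(w_n)\|\,\|w_n^-\|\le \|J'(w_n)\|\,\|w_n\|\le (1+\|w_n\|)\|J'(w_n)\|\to 0 ,
\]
so $\|w_n^-\|\to 0$ and $w_n^-(\cdot,0)\to 0$ in $L^2(\R)$. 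In the critical case, boundedness then follows from (AR): since $G(s)-\tfrac{1}{\theta} s g(s)\le 0$ for $s\ge 0$ and both sides vanish for $s\le 0$, one has $(\tfrac{1}{2}-\tfrac{1}{\theta})\|w_n\|^2\le J(w_n)-\tfrac{1}{\theta} J'(w_n)(w_n)$, and the right-hand side is bounded because $J(w_n)\to c$ and $|J'(w_n)(w_n)|\le\|J'(w_n)\|\,\|w_n\|\to 0$; as $\theta>2$, $(w_n)$ is bounded.

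\emph{Boundedness in the subcritical case.} I would argue by contradiction, assuming $\|w_n\|\to\infty$ along a subsequence and setting $v_n:=w_n/\|w_n\|$, so $\|v_n\|=1$ and, up to a further subsequence, $v_n\rightharpoonup v$ in $E$ with $v_n(\cdot,0)\to v(\cdot,0)$ a.e.\ and in $L^q_K(\R)$ (Proposition~\ref{converge}); by the previous step $v(\cdot,0)\ge 0$ a.e. Dividing $J(w_n)\to c$ by $\|w_n\|^2$ gives $\|w_n\|^{-2}\int_\R K(x)G(w_n(x,0))\,\ud x\to\tfrac{1}{2}$. Note that the monotonicity of $s\mapsto g(s)/s$ from (g3) makes $s\mapsto G(s)/s^2$ nondecreasing on $\R^+$ (hence $G(s)/s^2\to+\infty$ as $s\to+\infty$) and makes $H(s):=\tfrac{1}{2} s g(s)-G(s)$ nondecreasing on $\R^+$, with $H\equiv 0$ on $\R^-$. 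If $v(\cdot,0)\not\equiv 0$, then on the positive-measure set $\{v(x,0)>0\}$ we have $w_n(x,0)\to+\infty$ and $K(x)G(w_n(x,0))/\|w_n\|^2=K(x)\big(G(w_n(x,0))/w_n(x,0)^2\big)v_n(x,0)^2\to+\infty$, so Fatou's lemma (recall $K,G\ge 0$) gives $\|w_n\|^{-2}\int_\R K(x)G(w_n(x,0))\,\ud x\to+\infty$, a contradiction. If instead $v(\cdot,0)\equiv 0$ a.e., fix $R>0$ and pick $t_n\in[0,1]$ realizing $\max_{t\in[0,1]}J(tw_n)$ (possible since $t\mapsto J(tw_n)$ is continuous, Proposition~\ref{C-uno}); for $n$ large $R/\|w_n\|\le 1$, so $J(t_nw_n)\ge J(Rv_n)=\tfrac{1}{2} R^2-\int_\R K(x)G(Rv_n(x,0))\,\ud x$, and since $Rv_n\rightharpoonup 0$ in $E$ with $\|Rv_n\|=R$, Proposition~\ref{convergeG1} yields $\int_\R K(x)G(Rv_n(x,0))\,\ud x\to 0$. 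Thus $\liminf_n J(t_nw_n)\ge\tfrac{1}{2} R^2$ for every $R>0$, i.e.\ $J(t_nw_n)\to+\infty$; consequently $t_n\in(0,1)$ for $n$ large (as $J(0)=0$ and $J(w_n)\to c$), so $J'(t_nw_n)(t_nw_n)=0$. Finally, from $H(t_nw_n(x,0))\le H(w_n(x,0))$ we get
\[
J(t_nw_n)=J(t_nw_n)-\tfrac{1}{2} J'(t_nw_n)(t_nw_n)=\int_\R K(x)H(t_nw_n(x,0))\,\ud x\le J(w_n)-\tfrac{1}{2} J'(w_n)(w_n),
\]
which is bounded, contradicting $J(t_nw_n)\to+\infty$. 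Hence $(w_n)$ is bounded.

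\emph{Expected obstacle.} The identity $J'(w_n)(w_n^-)=-\|w_n^-\|^2$ and the elementary monotonicity facts about $G(s)/s^2$ and $\tfrac{1}{2} sg(s)-G(s)$ are routine; the delicate point is the subcritical boundedness when $v(\cdot,0)\equiv 0$, where one must control the truncated maximum $\max_{[0,1]}J(tw_n)$. This closes precisely because the Cerami (rather than Palais--Smale) condition gives $J'(w_n)(w_n)=o(1)$, together with $J'(t_nw_n)(t_nw_n)=0$ at the interior maximum and the monotonicity of $H$.
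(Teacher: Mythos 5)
Your proof is correct and follows the same route as the paper: the negative-part estimate and (AR) dispose of the critical case immediately, while the subcritical case is the Jeanjean-type monotonicity argument, which is exactly what the paper invokes (it cites \cite[Lemma 2.3]{JMO} and names the two ingredients you use, namely monotonicity of $\mathcal{H}(s)=sg(s)-2G(s)$ and the convergence \eqref{Veneto}). The only difference is one of detail: the paper's proof is a pointer to the external reference, whereas you have reconstructed the full dichotomy argument ($v\neq 0$ ruled out via Fatou and $G(s)/s^2\uparrow+\infty$; $v=0$ ruled out via the interior maximizer $t_n$ and $\int_\R K H(t_n w_n)\le\int_\R K H(w_n)$), which is precisely what \cite{JMO} must contain.
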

\begin{proof}
If $g$ has critical growth, the assertion is obvious since
the Ambrosetti-Rabinowitz condition (AR) is assumed. On the contrary, in the subcritical case, the proof follows by
mimicking the argument in the first part of the proof of \cite[Lemma 2.3]{JMO}, which is based upon monotonicity
of ${\mathcal H}(s)=sg(s)-2G(s)$,
holding since $\frac{g(s)}{s}$ is non-decreasing in $\R^+$,
and the application of \eqref{Veneto}.
\end{proof}

\noindent
To handle the case where $g$ is at critical growth, we shall need the following result.

\begin{lemma}
\label{boundmiuno}
Let $(w_n)\subset E$ be a bounded Palais-Smale sequence for the functional $J$ at the Mountain Pass energy level $c$.
Then 
$$
\sup_{n\in\N}\|w_n\|\in (0,1),
$$ 
provided that the constant $C_q>0$ which appears in {\rm (g3)$'$} is sufficiently large.
\end{lemma}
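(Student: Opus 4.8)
The plan is to make the mountain pass level $c$ as small as we wish by enlarging $C_q$, and then to use the Ambrosetti--Rabinowitz condition to bound $\limsup_n\|w_n\|^2$ by a fixed multiple of $c$; once $C_q$ is large enough this forces $\|w_n\|<1$ for all large $n$. The one genuinely non-elementary ingredient is the interplay of these two estimates; both are otherwise routine.

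\emph{Upper bound for $c$.} I would first fix, once and for all, a nonnegative $w_0\in E\setminus\{0\}$ (e.g.\ $w_0=E_{1/2}(\varphi)$ for a fixed $0\le\varphi\in C_0^\infty(\R)$, $\varphi\not\equiv0$), and set $A:=\|w_0\|^2>0$ and $B:=\int_\R K(x)|w_0(x,0)|^q\,\ud x>0$. For $t\ge0$ one has $tw_0(x,0)\ge0$, so (g3)$'$ yields $G(tw_0(x,0))\ge C_qt^q|w_0(x,0)|^q$ and hence $J(tw_0)\le\tfrac{t^2}{2}A-C_qBt^q$. By the superquadraticity $J(Tw_0)\le0$ for $T$ large, so after reparametrising $[0,T]$ onto $[0,1]$ the segment $t\mapsto tw_0$ is an admissible path in $\Gamma$, and therefore
\[
c\le\max_{t\ge0}\Big(\tfrac{t^2}{2}A-C_qBt^q\Big)=\Big(\tfrac12-\tfrac1q\Big)A\Big(\tfrac{A}{qC_qB}\Big)^{2/(q-2)}=:\kappa\,C_q^{-2/(q-2)},
\]
where $\kappa=\kappa(w_0,q)$ does not depend on $C_q$; thus $c\to0$ as $C_q\to\infty$, while $c\ge\beta>0$ always holds by Lemma~\ref{MPG}.

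\emph{Controlling $\|w_n\|$ and concluding.} Since $(w_n)$ is bounded and $\|J'(w_n)\|\to0$, we get $J'(w_n)(w_n)\to0$. As $g=0$ on $\R^-$ we also have $G\equiv0$ on $\R^-$, so (AR) holds for every $s\in\R$, i.e.\ $\tfrac1\theta sg(s)-G(s)\ge0$ pointwise; hence
\[
c+o(1)=J(w_n)-\tfrac1\theta J'(w_n)(w_n)=\Big(\tfrac12-\tfrac1\theta\Big)\|w_n\|^2+\int_\R K(x)\Big(\tfrac1\theta w_n(x,0)g(w_n(x,0))-G(w_n(x,0))\Big)\ud x\ge\Big(\tfrac12-\tfrac1\theta\Big)\|w_n\|^2 .
\]
Taking $\limsup$ gives $\limsup_n\|w_n\|^2\le\tfrac{2\theta}{\theta-2}\,c\le\tfrac{2\theta}{\theta-2}\,\kappa\,C_q^{-2/(q-2)}$. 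I would then choose $C_q$ so large that the right-hand side is $<1$, so that $\|w_n\|<1$ for all $n\ge n_0$ and some $n_0$; moreover $\sup_n\|w_n\|>0$, since $w_n\equiv0$ would contradict $J(w_n)\to c\ge\beta>0$.

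\emph{A caveat.} The estimate based on (AR) only controls $\limsup_n\|w_n\|$, so strictly speaking the conclusion $\sup_n\|w_n\|\in(0,1)$ refers to the tail $(w_{n+n_0})_n$; I would note that this is harmless, because any tail of a Palais--Smale sequence at level $c$ is again a Palais--Smale sequence at level $c$, so the subsequent compactness analysis is unaffected. All the remaining work is elementary: the bound on $c$ is a one-variable maximisation, and the (AR) identity is the standard Nehari-type manipulation.
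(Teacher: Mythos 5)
Your proof is correct and follows essentially the same route as the paper: an admissible ray $t\mapsto tw_0$ together with the pointwise bound $G(s)\ge C_q s^q$ gives $c\le\max_{t\ge0}\big(\tfrac{t^2}{2}A-C_qBt^q\big)=\kappa\,C_q^{-2/(q-2)}$, and (AR) applied to the Palais--Smale identity gives $\limsup_n\|w_n\|^2\le\tfrac{2\theta}{\theta-2}c$; the paper differs only cosmetically, estimating $\int K|\psi|^q$ from below by $L\,\|\psi(\cdot,0)\|_{L^q}^q$ with $L=\inf_{\supp\psi}K$ and writing the max in terms of a ratio $\S_q$, whereas you keep the integral $B$ directly. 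Your caveat that the argument controls $\limsup$ rather than $\sup$ (so one passes to a tail) is a fair observation and applies equally to the paper's own statement; it is harmless, as you note.
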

\begin{proof}
Let $q>2$ and $C_q>0$ as in assumption (g3)$'$. Fix $\psi\in C^\infty_c(\R)\setminus\{0\}$ and let us denote
$$
L:=\inf_{{\rm supt}(\psi)}K>0,  \,\,\,\quad 
\S_q:=\frac{\Big(\displaystyle \int_{\R^2}(\nabla \psi|^2  \,\ud x \ud y + \int_{\R} \psi(x,0)^2  \,\ud x\Big)^{1/2}}
{\Big(\displaystyle\int_{\R}|\psi(x,0)|^q \,\ud x\Big)^{1/q}}=\frac{\|\psi\|}{\|\psi(\cdot,0)\|_{L^q}}.
$$
Also, let $\omega_q>0$ be such that $J(\omega_q \psi)<0$. This is possible since
\begin{align*}
J(\omega \psi)=\frac{\omega^2}{2}\|\psi\|^2-\int_\R K(x)G(\omega\psi(x,0)) \,\ud x
\leq\frac{\omega^2}{2}\|\psi\|^2-\omega^q C_qL  \int_{\R}| \psi(x,0)|^q  \,\ud x<0,
\end{align*}
for every $\omega=\omega_q>0$ sufficiently large. Then, $\gamma\in C([0,1],E)$ defined by $\gamma(t):=t\omega_q\psi\in \Gamma$
for  $t\in [0,1]$ belongs to the class of continuous paths $\Gamma$. Hence, we get
\begin{align*}
c&=\inf_{\gamma \in \Gamma}\max_{t\in[0,1]}J(\gamma(t)) 
\leq\max_{t\in [0,1]}J(t\omega_q \psi)\leq\max_{t\in\R^+}J(t\psi)\\
&=\max_{t\geq 0} \Big(\frac{t^2}{2}\|\psi\|^2 -C_qL  t^q\|\psi(\cdot,0)\|^q_{L^q} \Big) \\
&=\max_{t\geq 0} \Big(\frac{\S_q^2}{2} t^2 \|\psi(\cdot,0)\|^2_{L^q} -C_qL t^q \|\psi(\cdot,0)\|^q_{L^q} \Big) \\
&=\max_{t\geq 0} \Big(\frac{\S_q^2}{2} t^2-C_qL t^q\Big) 
= \frac{q-2}{2q}\frac{\S^{\frac{2q}{q-2}}_q}{(qC_qL)^{\frac{2}{q-2}}}.
\end{align*}
On the other hand, since $(w_n)$ is a Palais-Smale sequence, we get
\begin{align*}
c = \limsup_n \big (J(w_n)- \frac{1}{\theta} J'(w_n)(w_n) \big)
\geq \frac{\theta-2}{2 \theta}\limsup_{n} \|w_n\|^2.
\end{align*}
In turn, by combining the above inequalities, we get
\begin{equation*}
\limsup_{n} \|w_n\|^2\leq \frac{2\theta}{\theta-2}\frac{q-2}{2q}\frac{\S^{\frac{2q}{q-2}}_q}{(qC_qL)^{\frac{2}{q-2}}}<1,
\end{equation*}
provided that $C_q$ is large enough.
\end{proof}

\section{Proof of the main results}
\subsection{Proof of Theorem~\ref{mainsub} completed}
\label{prova1}
\noindent
In light of Lemma~\ref{MPG}, there exists a {\em Cerami} sequence $\{w_n\}\subset E$ for $J$ at the Mountain Pass level $c>0$. 
From Lemma~\ref{cerami} it follows that $\{w_n\}$ is bounded, $w^-_n\to 0$ in $E$, and thus it admits 
a nonnegative weak limit $w\in E$. 
By \eqref{Roma} of Proposition \ref{convergeG1},  it follows that
\begin{equation}
\label{solve}
 \int_{ \R^{2}_{+}}\nabla w \cdot\nabla v  \,\ud x \ud y + \int_{\R}w(x,0)v(x,0)  \,\ud x = \int_{\mathbb{R}} K(x) g(w(x,0))v(x,0)  \,\ud x,
 \quad \forall v\in E.
\end{equation}
Then, we have a weak solution $u \in H^{1/2}(\R)$ to \eqref{PS}.
We have $u>0$ if $u\neq 0$, arguing as in \cite{JMO}. We prove that $w=E_s(u)\not\equiv 0$.
In fact, $(w_n)$ converges  to $w$ strongly in $E,$ as $n\to\infty$.
Indeed, since $J'(w_n)(w_n)=o_n(1),$ we have, by \eqref{Toscana} of Proposition \ref{convergeG1},
\begin{equation*}
\lim_n \|w_n\|^2  =\lim_n \int_{\R}K(x)g(w_n(x,0))w_n(x,0) \,\ud x 
=\int_{\R}K(x)g(w(x,0))w(x,0) \,\ud x=\|w\|^2,
\end{equation*}
that is, $ w_n \rightarrow w$ in $E.$  Hence $J(w)=c>0$ by continuity, yielding $w\not\equiv 0.$ \qed

\subsection{Proof of Theorem~\ref{main} completed}
\label{prova2}
\noindent 
In light of Lemma~\ref{MPG}, there exists a {\em Cerami} sequence $\{w_n\}\subset E$ for $J$ at the Mountain Pass level $c>0$. 
From Lemma~\ref{cerami} it follows that $\{w_n\}$ is bounded, $w^-_n\to 0$ in $E$, and thus it admits 
a nonnegative weak limit $w\in E$.  
By taking $C_q$ sufficiently large in assumption (g3)$'$, in light of Lemma~\ref{boundmiuno}, it follows that
$\sup_{n\in\N}\|w_n\|\in (0,1)$. Then, we are allowed to apply the assertions of Proposition \ref{convergeG2}.
By \eqref{Roma1} of Proposition \ref{convergeG2},  it follows that \eqref{solve} is satisfied.
Then, we have a weak solution $u \in H^{1/2}(\R)$ to \eqref{PS}.
We have $u>0$ if $u\neq 0$, arguing as in \cite{JMO}. Indeed $u\neq 0$. In fact $w=E_s(u)\not\equiv 0$.  
Suppose by contradiction that $w=0.$
Then, since $w_n \rightharpoonup 0$, we have by \eqref{Veneto1} and \eqref{Toscana1} of Proposition \ref{convergeG2}
$$
\lim_n\int_{\R}K(x)G(w_n(x,0) \,\ud x=\lim_n\int_{\R}K(x) g(w_n(x,0)w_n(x,0) \,\ud x=0.
$$
Then, from
\begin{align*}
& \frac{1}{2}\|w_n\|^2 - \int_{\mathbb{R}} K(x) G(w_n(x,0))   \,\ud x  
=c+o_n(1),  \\
& \|w_n\|^2 - \int_{\mathbb{R}} K(x) g(w_n(x,0))w_n(x,0)  \,\ud x =o_n(1)
\end{align*}
we get a contradiction, since $c>0$. The proof is complete. \qed

\medskip
\bigskip

\medskip

\end{document}